\documentclass[conference,twoside,10pt]{ieeeconf}
\usepackage{generic}
\usepackage{cite}
\usepackage{amsmath,amssymb,amsfonts}
\usepackage{algorithmic}
\usepackage{graphicx}
\usepackage{subcaption}
\usepackage{mathrsfs}
\usepackage{tikz}
\usepackage{stfloats}
\usepackage{booktabs}
\usepackage{afterpage}
\usepackage{amsmath}

\newcommand{\R}{\mathbb{R}}
\usepackage{diagbox}
\usepackage{multirow} 
\usepackage{amsfonts}
\usepackage{xparse}
\usepackage{mathtools}
\usepackage{stmaryrd}
\usepackage{caption}
\usepackage{subcaption}
\usepackage{tikz}
\usepackage{graphicx}
\usepackage{svg}
\usepackage{physics}
\usepackage{pgfplots}
\pgfplotsset{compat=1.18}
\usepackage{empheq}
\usepackage{comment}
\usepackage{xcolor}
\newtheorem{theorem}{Theorem}
\newtheorem{proposition}{Proposition}
\newtheorem{corollary}{Corollary}
\newtheorem{property}{Property}
\newtheorem{remark}{Remark}
\newtheorem{example}{Example}
\newtheorem{definition}{Definition}
\newtheorem{ass}{Assumption}
\newtheorem{lem}{Lemma}

\newenvironment{assumption}{\begin{ass}}{\hfill $\bullet$ \end{ass}}

\usepackage{algorithm,algorithmic}
\usepackage{hyperref}

\usepackage{textcomp}
\def\BibTeX{{\rm B\kern-.05em{\sc i\kern-.025em b}\kern-.08em
    T\kern-.1667em\lower.7ex\hbox{E}\kern-.125emX}}
\title{\LARGE\bf How Much Spatial Control Is Enough? Subdomain Optimal Control of Reaction--Diffusion Systems in Synthetic Developmental Biology}

\author{
  M.~A. Ouchdiri, H.~Faquir, S.~Benjelloun,
  A.~Saoud
\thanks{M.~A. Ouchdiri and A.~Saoud are with College of
Computing, University Mohammed VI Polytechnic, Benguerir, Morocco (e-mail: amine.ouchdiri,adnane.saoud@um6p.ma).}
  \thanks{H.~Faquir is with Department of Bioproducts and Biosystems, Aalto University, Aalto, Finland (e-mail: hamza.faquir@aalto.fi).}
  \thanks{S.~Benjelloun is with De Vinci Research Center, Paris, France. (e-mail: saad.benjelloun@devinci.fr).}
}

\begin{document}
\maketitle
\thispagestyle{empty}\pagestyle{empty}

\begin{abstract}
Reaction--diffusion systems can produce spatial patterns such as stripes and spots through diffusion-driven instability. Steering these patterns from one configuration to another can be formulated as an optimal control problem. When the control acts on the entire spatial domain, existence and optimality conditions are well understood. Yet in practice, the control can only act on a part of the domain. 
Taking the Nodal--Lefty reaction--diffusion system as a case study, we consider the setting where the control is restricted to a subdomain. We derive an explicit upper bound on the optimality loss, defined as the difference between the subdomain optimal cost and the full-domain optimal cost. From this bound, we obtain an explicit formula for the minimum size of the control region needed to reach a target pattern with prescribed accuracy.
We also consider the case where the control is distributed over several disjoint regions instead of a single one, with the same total area, and prove that the distributed configuration gives a tighter bound under natural conditions on the spatial structure of the target. Numerical illustrations confirm the theoretical results and show that a control region covering roughly forty percent of the domain is sufficient to drive the system from stripes to spots with high accuracy.
\end{abstract}

\section{Introduction}\label{sec1}
Reaction--diffusion systems can develop stable spatial patterns through diffusion-driven instability~\cite{Turing1952}. Two species with different diffusion rates interact on a spatial domain and produce structures such as stripes and spots on two-dimensional domains. A well-studied instance is the Nodal--Lefty two-species system~\cite{Ouchdiri2025_MathBio}, which was created in synthetic cell populations in~\cite{Sekine2018}. 

The problem of steering such a system from one pattern to another, for example from stripes to spots, was formulated as an optimal control problem for coupled reaction--diffusion equations in~\cite{CDC_paper}. This framework builds on the classical theory of optimal control of partial differential equations~\cite{Lions1971,Troeltzsch2010}. Existence of optimal controls, adjoint regularity, and first-order optimality conditions were established, and the approach was validated numerically on the Nodal--Lefty system. In~\cite{CDC_paper}, the control acts on the entire spatial domain. In practice, however, experimental setups in synthetic biology can only act on a limited region of the domain~\cite{Krueger2019}. The question then becomes: how much performance is lost when the control is confined to a subdomain, and how large must this subdomain be to reach a target with prescribed accuracy. 

This problem can be formulated as an optimal control problem in which the control input is restricted to a subdomain of the spatial domain. Optimal control with such spatial restrictions has been studied in several settings. For linear parabolic equations, where
the control acts on a subdomain often
called the actuator, the dependence of
optimal controls on this subdomain was
analysed in~\cite{Zheng2015}, the
optimal actuator location for
linear-quadratic problems was
characterised
in~\cite{Morris2011,Kasinathan2013},
and coupled actuator placement and
controller design was addressed
in~\cite{Rhein2016}. For nonlinear
reaction--diffusion systems, subdomain
optimal control was applied to epidemic
models
in~\cite{Chang2022,Laaroussi2019}.

In the works on nonlinear reaction--diffusion systems cited above, the control region is chosen in advance and does not vary. The analysis establishes that an optimal control exists and characterises it through optimality conditions, but does not address the question of how the cost depends on the size of the control region. In particular, it is not known how much performance is lost when the control region becomes smaller, nor how to choose the smallest region
that still guarantees a prescribed accuracy on the target pattern. It is
also not known whether, for a fixed total control area, using several small control regions performs better than
using a single one. Answering these questions would provide concrete guidelines for choosing the size and arrangement of the control region in synthetic biology experiments~\cite{Ebrahimkhani2019}.
In this paper, we address these questions, taking the Nodal--Lefty reaction--diffusion system as a case study.  The key idea is to take the full-domain optimal control and set it to zero outside the subdomain. The resulting control still satisfies the input constraints and is supported on the subdomain, so it is admissible for the subdomain problem. Comparing its cost to the full-domain optimal cost yields an upper bound on the difference between the two, which we call the optimality loss bound. We show that this bound depends explicitly on the subdomain size, and from it we obtain an explicit formula for the minimum subdomain size needed to reach a target with prescribed accuracy. We extend the analysis to several disjoint control regions distributed over the domain with the same total area as a single one, and we prove that the distributed configuration gives a tighter bound under natural conditions on the spatial structure of the target. Numerical results confirm the theoretical results and show that a control region covering roughly forty percent of the domain is sufficient to drive the system from stripes to spots with high accuracy. 

The remainder of this paper is organized as follows. Section~\ref{sec2} formulates the problem. Section~\ref{sec3} establishes well-posedness and recalls the optimality conditions. Section~\ref{sec4} presents the optimality loss bounds and the minimum coverage results. Section~\ref{sec5} illustrates the theory with numerical simulations.

\subsection*{Notations}
We denote by $L^2(\Omega)$ the space
of measurable functions
$f\colon\Omega\to\R$ such that
$\|f\|_{L^2(\Omega)}
:=(\int_\Omega|f(x)|^2\,dx)^{1/2}
<\infty$. The inner product on
$L^2(\Omega)$ is
$\langle f,g\rangle_{L^2(\Omega)}
:=\int_\Omega f(x)\,g(x)\,dx$.
For $f=(f_1,f_2)\in(L^2(\Omega))^2$,
we write
$\|f\|_{(L^2(\Omega))^2}
:=(\|f_1\|^2_{L^2(\Omega)}
+\|f_2\|^2_{L^2(\Omega)})^{1/2}$.
For $f\in L^\infty(\Omega)$, we write
$\|f\|_{L^\infty(\Omega)}
:=\mathrm{ess\,sup}_{x\in\Omega}
|f(x)|$. The spaces $L^2(Q)$ and
$L^\infty(Q)$ are defined on the
space--time cylinder $Q$ with norms
defined analogously.
We let $H^1(\Omega)$ denote the space
of functions $f\in L^2(\Omega)$ such
that $|\nabla f|\in L^2(\Omega)$,
where
$\nabla f:=(\partial_{x_1}f,
\partial_{x_2}f)$ is the gradient
of $f$. The norm on
$H^1(\Omega)$ is
$\|f\|_{H^1(\Omega)}
:=(\|f\|^2_{L^2(\Omega)}
+\|\nabla f\|^2_{(L^2(\Omega))^2}
)^{1/2}$.
We let $H^2(\Omega)$ denote the space
of functions $f\in H^1(\Omega)$ such
that $\Delta f\in L^2(\Omega)$, where
$\Delta f:=\partial_{x_1}^2 f
+\partial_{x_2}^2 f$ is the Laplacian
of $f$. The norm on $H^2(\Omega)$ is
$\|f\|_{H^2(\Omega)}
:=(\|f\|^2_{H^1(\Omega)}
+\|\Delta f\|^2_{L^2(\Omega)})^{1/2}$.
The outward normal derivative on
$\partial\Omega$ is
$\partial_\nu f:=\nabla f\cdot\nu$,
where $\nu$ is the unit outward
normal to $\partial\Omega$. For a Banach space $V$ with norm
$\|\cdot\|_V$, we denote by
$L^2(0,T;V)$ the space of measurable
functions $f\colon[0,T]\to V$ such
that $\|f\|_{L^2(0,T;V)}
:=(\int_0^T\|f(t)\|_V^2\,dt)^{1/2}
<\infty$, and by $H^1(0,T;V)$ the
subspace of functions
$f\in L^2(0,T;V)$ whose time
derivative $\partial_t f$ belongs
to $L^2(0,T;V)$, with norm
$\|f\|_{H^1(0,T;V)}
:=(\|f\|^2_{L^2(0,T;V)}
+\|\partial_t f\|^2_{L^2(0,T;V)}
)^{1/2}$.
We denote by $C([0,T];V)$ the space
of continuous functions
$f\colon[0,T]\to V$ equipped with
the norm
$\|f\|_{C([0,T];V)}
:=\sup_{t\in[0,T]}\|f(t)\|_V$.

 For $a,b\in\R$
with $a<b$, we write
$\mathrm{Proj}_{[a,b]}(s)
:=\max\{a,\min\{s,b\}\}$ for the
projection of $s$ onto $[a,b]$.
For $\xi,\zeta\in\R^2$, we let
$d_\infty(\xi,\zeta)
:=\|\xi-\zeta\|_\infty$ and
$B_\infty(x,r)
:=\{\xi\in\R^2:
\|\xi-x\|_\infty<r\}$ for the open
square of half-side $r$ centred
at $x$.
\section{Problem Formulation}\label{sec2}
In this section, we introduce the state equation, the cost functional, the two control configurations, and the standing assumptions used throughout the paper. We work on the square spatial domain $\Omega=(0,L)^2\subset\R^2$ over a fixed time horizon $T>0$. We write $Q:=\Omega\times(0,T)$ for the space--time cylinder and $\Sigma:=\partial\Omega\times(0,T)$ for its lateral boundary.
\subsection{State equation}\label{subsec1}
We consider the Nodal--Lefty reaction--diffusion system studied in~\cite{Ouchdiri2025_MathBio}. 
Let $\omega\subseteq\Omega$ be a measurable subset representing the control region. The Nodal and Lefty concentrations, denoted $z_n$ and $z_l$, evolve according to
\begin{equation}\label{eq1}
\left\{
\begin{aligned}
\partial_t z_n &= D_n\Delta z_n
  - \gamma_n z_n
  + \alpha_n\,H(z_n,z_l)\\
  &\quad + \beta_n\chi_\omega u_n,\\
\partial_t z_l &= D_l\Delta z_l
  - \gamma_l z_l
  + \alpha_l\,H(z_n,z_l)\\
  &\quad + \beta_l\chi_\omega u_l,\\
\partial_\nu z_n\big|_\Sigma
  &= \partial_\nu z_l\big|_\Sigma = 0,\\
z_n(\cdot,0) &= z_{n,0},
  \quad z_l(\cdot,0) = z_{l,0},
\end{aligned}
\right.
\end{equation}
where $z_n,z_l\colon\bar\Omega\times[0,T]\to\R_+$, $(x,t)\mapsto z_i(x,t)$, are the state variables and $u=(u_n,u_l)$ is the control input. The constants $D_n,D_l>0$ are the diffusion coefficients, $\gamma_n,\gamma_l>0$ are the degradation rates, $\alpha_n,\alpha_l>0$ are the production rates, and $\beta_n,\beta_l>0$ are the input gains. The initial conditions satisfy $z_{n,0},z_{l,0}\in L^\infty(\Omega)$ with $z_{n,0},z_{l,0}\geq 0$~a.e. The homogeneous Neumann conditions model a domain with no flux across $\partial\Omega$. The control acts on the state only through the localised terms $\beta_i\chi_\omega u_i$, where we denote
by $\chi_\omega\colon\Omega
\to\{0,1\}$ its characteristic
function ($\chi_\omega(x)=1$ if
$x\in\omega$, $\chi_\omega(x)=0$
otherwise).
The nonlinear coupling between the two species is governed by the function
\begin{equation}\label{eq2}
H(z_n,z_l)
:= \frac{z_n^{n_n}}
{z_n^{n_n}
+ \bigl[k_n\bigl(1
+ (z_l/k_l)^{n_l}\bigr)
\bigr]^{n_n}},
\end{equation}
which takes values in $[0,1]$ and models competitive inhibition: $z_n$ activates production while $z_l$ suppresses it. The Hill coefficients $n_n,n_l\geq 1$ and the half-saturation constants $k_n,k_l>0$ are fixed parameters. The rational structure of~\eqref{eq2} ensures that $H$ is globally Lipschitz on $\R_+^2$: there exists $L_H>0$ such that
\begin{equation}\label{eq3}
|H(a)-H(b)|\leq L_H\,|a-b|
\quad\text{for all } a,b\in\R_+^2.
\end{equation}
Moreover, $H$ is $C^\infty$ on
$(0,\infty)^2$ since all power
functions in~\eqref{eq2} are smooth
for positive arguments and the
denominator is strictly positive.

\subsection{Cost functional and admissible controls}\label{subsec2}

The goal is to steer the system from an initial Turing pattern to a prescribed target $z_d=(z_{n,d},z_{l,d})\in(L^\infty(\Omega))^2$ at the final time. In~\cite{CDC_paper}, this was achieved by acting on the entire domain $\Omega$. Here, the control is confined to a subdomain $\omega\subsetneq\Omega$. The cost functional is
\begin{align}\label{eq4}
J(u;\omega)
&:= \frac{\mu}{2}\int_\Omega
  \bigl|z_n(x,T)-z_{n,d}(x)\bigr|^2\,dx
\notag\\
&\quad + \frac{\mu}{2}\int_\Omega
  \bigl|z_l(x,T)-z_{l,d}(x)\bigr|^2\,dx
\notag\\
&\quad + \frac{\lambda}{2}
  \int_0^T\!\!\int_\omega
  \bigl(u_n^2+u_l^2\bigr)\,dx\,dt,
\end{align}
where $z=(z_n,z_l)$ is the solution of~\eqref{eq1} corresponding to~$u$, the weight $\mu>0$ penalises the terminal tracking error, and $\lambda>0$ penalises the control effort. The regularity $z_d\in(L^\infty(\Omega))^2$ guarantees that the adjoint state belongs to $(L^\infty(Q))^2$ (see Remark~\ref{Rem3}), which is needed for the pointwise estimates in Section~\ref{sec4}. The set of admissible controls is
\begin{align}\label{eq5}
U_{\mathrm{ad}}
:= \bigl\{u\in (L^\infty(Q))^2 :\;
  &0\leq u_i\leq u_{b,i}
\notag\\
  &\text{a.e.\ in }Q,\;
  i\in\{n,l\}
\bigr\},
\end{align}
where $u_{b,n},u_{b,l}>0$ are the fixed upper bounds.
\subsection{Control configurations and optimality loss}\label{subsec3}

We now define the two control configurations studied in this paper. When $\omega=\Omega$ the control acts on the entire domain and the optimal cost is
\begin{equation}\label{eq6}
J_\Omega^\star
:= \min_{u\in U_{\mathrm{ad}}} J(u;\Omega).
\end{equation}
The existence of a minimiser $u^*_\Omega\in U_{\mathrm{ad}}$ achieving $J_\Omega^\star$ was established in~\cite{CDC_paper}.
In practice, full-domain control is rarely feasible~\cite{Krueger2019}. We therefore restrict the control to a subdomain $\omega_\varepsilon\subsetneq\Omega$. We consider two cases.

In the first case, $\omega_\varepsilon$ is a single square of half-side $\varepsilon\in(0,L/2]$ centred at $x_0\in\Omega$,
\begin{equation}\label{eq7}
\omega_\varepsilon
:= \bigl\{\xi\in\Omega :
  \|\xi-x_0\|_\infty<\varepsilon\bigr\},
\end{equation}
of area $|\omega_\varepsilon|=4\varepsilon^2$ (where $|\cdot|$ denotes the Lebesgue measure), provided $x_0$ lies at $\ell^\infty$-distance at least $\varepsilon$ from $\partial\Omega$.

In the second case, $\omega_\varepsilon$ is a union of $M$ disjoint squares,
\begin{equation}\label{eq8}
\omega_\varepsilon
:= \bigcup_{k=1}^M
  \omega_{\varepsilon_k}(x^{(k)}),
\end{equation}
with centres $x^{(1)},\ldots,x^{(M)}\in\Omega$ and half-sides $\varepsilon_1,\ldots,\varepsilon_M>0$ satisfying $\sum_{k=1}^M 4\varepsilon_k^2 = 4\varepsilon^2$, so that the total controlled area is the same $|\omega_\varepsilon|=4\varepsilon^2$.
In both cases, restricting the control to $\omega_\varepsilon$ can only increase the cost: any admissible control supported on $\omega_\varepsilon$ is also admissible for the full-domain problem with the same objective value, so $J_\Omega^\star\leq J_{\omega_\varepsilon}^\star$. Here
\begin{equation}\label{eq9}
J_{\omega_\varepsilon}^\star
:= \min_{u\in U_{\mathrm{ad}}}
J(u;\omega_\varepsilon)
\end{equation}
is the optimal cost of the subdomain problem, that is, the minimum of~\eqref{eq4} over $U_{\mathrm{ad}}$ subject to the state equation~\eqref{eq1} with control region $\omega_\varepsilon$. The \emph{optimality loss}
\begin{equation}\label{eq10}
\Delta(\omega_\varepsilon)
:= J_{\omega_\varepsilon}^\star
- J_\Omega^\star \geq 0
\end{equation}
is the difference between the subdomain and full-domain optimal costs.

In both cases, the centres $x_0$ and
$x^{(1)},\ldots,x^{(M)}$ are fixed
throughout the analysis; we do not
optimise their placement. What varies
is the half-side $\varepsilon$, which
determines the coverage of the subdomain
$|\omega_\varepsilon|/|\Omega|$.
When both configurations have the same total area, a single subdomain concentrates the control at one location, while the multi-subdomain configuration spreads it across $M$ sites. Since the target $z_d$ is a Turing pattern with several concentration peaks, the optimal control $u^*_\Omega$ is most active near these peaks. To quantify this, we define the spatial energy density
\begin{equation}\label{eq11}
\varphi(x)
:= \int_0^T \sum_{i\in\{n,l\}}
  |u^*_{i,\Omega}(x,t)|^2\,dt,
  \quad x\in\Omega,
\end{equation}
which records the total control effort at each point $x$ over the time horizon. Distributing the subdomains across the peaks of $\varphi$ captures more control energy and should reduce the loss compared to a single subdomain.

\subsection{Standing assumptions}\label{subsec4}
The following conditions are needed for the multi-subdomain and the comparison result in Section~\ref{sec4}.

\begin{assumption}\label{As1}
Let $A:=|\omega_\varepsilon|=4\varepsilon^2$
be the total controlled area. There exist
$M\geq 2$ points
$x^{(1)},\ldots,x^{(M)}\in\Omega$,
a radius $\delta>0$ with
$\varepsilon<\delta$, and constants
$c_0>0$ and $c_1\geq 0$ such that:
\begin{enumerate}
\item[\emph{(H1)}]
$B_\infty(x^{(k)},\delta)\subset\Omega$ for all $k$,
\item[\emph{(H2)}]
$d_\infty(x^{(k)},x^{(j)})\geq 2(\delta+\varepsilon)$ for all $k\neq j$,
\item[\emph{(H3)}]
$\varphi(x)\geq c_0$ for a.e.\ $x\in B_\infty(x^{(k)},\delta)$, for all $k$,
\item[\emph{(H4)}]
$\varphi(x)\leq c_1$ for a.e.\ $x\in \Omega\setminus\bigcup_k B_\infty(x^{(k)},\delta)$, with $c_1< c_0(M-1)/M$.
\end{enumerate}
\end{assumption}
\begin{remark}\label{Rem1}
Condition~(H2) ensures that the closures $\overline{\omega_{\varepsilon_k}(x^{(k)})}$ are pairwise disjoint and contained in $\Omega$. Conditions~(H1)--(H4) formalise the spatial structure of Turing patterns~\cite{Turing1952,Ouchdiri2025_MathBio}: the peaks of $\varphi$ are interior to $\Omega$, well separated, carry a definite amount of control energy, and the energy away from the peaks is comparatively small.
\end{remark}

With these definitions and assumptions in place, the two questions addressed in this paper can be stated precisely.

\medskip
\noindent\colorbox{gray!15}{\parbox{0.93\columnwidth}{
\emph{(Q1) Given a tolerance $\eta>0$, what is the minimum control coverage $\varepsilon$ in the single-subdomain case, or the total control area $4\sum_k\varepsilon_k^2$ in the multi-subdomain case that guarantees $\Delta\leq\eta$\,?}

\smallskip
\emph{(Q2) For a fixed total control area, does distributing the control over several subdomains perform better than concentrating it on a single one?}

}}

\section{Well-Posedness and Optimality Conditions}\label{sec3}
The main results in Section~\ref{sec4} rely on three ingredients: well-posedness of the state equation, Lipschitz continuity of the control-to-state operator, and the first-order optimality conditions. We collect them here. Most follow from~\cite{CDC_paper} and standard parabolic theory~\cite{Barbu2012,Lions1971,Troeltzsch2010}. Throughout this section, $\omega\subseteq\Omega$ denotes a generic control region; the results apply in particular to $\omega=\Omega$ and to $\omega=\omega_\varepsilon$ as defined in Section~\ref{subsec3}.
\subsection{Well-posedness}\label{subsec5}
Since the function $H$ is globally Lipschitz on $\mathbb{R}_+^2$, existence and uniqueness of a strong solution follow from a fixed-point argument; see~\cite[Proposition~1.2, p.~175]{Barbu2012}. Precisely, for every $u\in U_{\mathrm{ad}}$ and every $\omega\subseteq\Omega$, system~\eqref{eq1} admits a unique strong solution $z=(z_n,z_l)$ with
\begin{equation}\label{eq12}
z_i\in H^1(0,T;L^2(\Omega))
\cap L^2(0,T;H^2(\Omega))
\cap L^\infty(Q)
\end{equation}
for each $i\in\{n,l\}$. Moreover, the method of upper and lower solutions~\cite[Section~8.7, p.~417]{Pao1992} gives the pointwise bounds
\begin{equation}\label{eq13}
0\leq z_i\leq M_i
:=\max\Big\{\|z_{i,0}\|_\infty,\;
\frac{\alpha_i+\beta_i u_{b,i}}
{\gamma_i}\Big\},
\end{equation}
which are uniform over $U_{\mathrm{ad}}$. The bounds $M_n$ and $M_l$ define the
rectangle $K:=[0,M_n]\times[0,M_l]$
in which the states take values. As
noted in Section~\ref{subsec1},
$H\in C^\infty((0,\infty)^2)$ and
$\|D^2H\|_{L^\infty(K)}<\infty$;
this is used in the proof of
Proposition~\ref{Prop3}.

\begin{remark}\label{Rem2}
Since $z_i\in H^1(0,T;L^2(\Omega))$, it follows from~\cite[Theorem~2(i), p.~285]{Evans2010} that $z_i\in C([0,T];L^2(\Omega))$. In particular, the map $u\mapsto z$ from $U_{\mathrm{ad}}$ to $C([0,T];(L^2(\Omega))^2)$ is well-defined. This regularity is used in the next subsection to show that this map is Lipschitz continuous.
\end{remark}

\subsection{Lipschitz continuity of the control-to-state operator}\label{subsec6}

The well-posedness of~\eqref{eq1} allows us to define the control-to-state operator
\begin{equation}\label{eq14}
S_\omega\colon U_{\mathrm{ad}}\to C([0,T];(L^2(\Omega))^2),\quad S_\omega(u):=z,
\end{equation}
which maps each admissible control $u$ to the corresponding solution $z$ of~\eqref{eq1}. We show that $S_\omega$ is Lipschitz continuous from $U_{\mathrm{ad}}\subset(L^2(Q))^2$ to $C([0,T];(L^2(\Omega))^2)$, with a constant $L_S$ that depends only on the system parameters and not on $\omega$. This constant enters every bound in Section~\ref{sec4}.

\begin{proposition}\label{Prop1}
For any $\sigma>0$, define
\begin{equation}\label{eq15}
L_S := \frac{\beta_{\max}}{\sqrt{\sigma}}\,
e^{(\alpha_{\max}L_H+\sigma/2)T},
\end{equation}
where $\alpha_{\max}:=\max\{\alpha_n,\alpha_l\}$, $\beta_{\max}:=\max\{\beta_n,\beta_l\}$, and $L_H$ is the Lipschitz constant of $H$. Then, for all $u^1,u^2\in U_{\mathrm{ad}}$ and every $\omega\subseteq\Omega$,
\begin{align}\label{eq16}
\sup_{t\in[0,T]}
&\|S_\omega(u^1)(t)
  -S_\omega(u^2)(t)\|_{(L^2(\Omega))^2}
\notag\\
&\qquad\leq L_S\,
  \|u^1-u^2\|_{(L^2(Q))^2}.
\end{align}
In particular, $L_S$ depends only on the system parameters and not on $\omega$.
\end{proposition}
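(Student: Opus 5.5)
I will prove the estimate with a standard energy argument applied to the difference of two solutions, followed by Young's inequality and Gronwall's lemma. Fix $\omega\subseteq\Omega$ and $u^1,u^2\in U_{\mathrm{ad}}$, and let $z^j=S_\omega(u^j)$. Set $w:=z^1-z^2=(w_n,w_l)$ and $v:=u^1-u^2$. Subtracting the two copies of~\eqref{eq1} gives, for $i\in\{n,l\}$,
\begin{equation*}
\partial_t w_i = D_i\Delta w_i-\gamma_i w_i+\alpha_i\bigl(H(z^1)-H(z^2)\bigr)+\beta_i\chi_\omega v_i ,
\end{equation*}
with homogeneous Neumann data and $w_i(\cdot,0)=0$. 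The regularity~\eqref{eq12} allows us to multiply by $w_i$ and integrate over $\Omega$. The chain rule $\tfrac{d}{dt}\tfrac12\|w_i\|^2=\langle\partial_t w_i,w_i\rangle$ holds for $H^1(0,T;L^2)\cap L^2(0,T;H^2)$ functions. Green's formula with $\partial_\nu w_i=0$ gives $\langle\Delta w_i,w_i\rangle=-\|\nabla w_i\|^2\le 0$. Since $\gamma_i>0$, we may also drop the degradation term.

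For the nonlinear term, I use~\eqref{eq3} together with the bound $0\le z^j\le M$ from~\eqref{eq13}, which guarantees that the states lie in $\R_+^2$. This gives $|H(z^1)-H(z^2)|\le L_H|w|$ pointwise, where $|w|$ is the Euclidean norm in $\R^2$. By Cauchy--Schwarz, $\sum_i\alpha_i\langle H(z^1)-H(z^2),w_i\rangle\le\alpha_{\max}L_H\int_\Omega|w|(|w_n|+|w_l|)\,dx$. The crude estimate $|w_n|+|w_l|\le\sqrt2|w|$ would bring in a factor $\sqrt2$. To match the stated exponent $\alpha_{\max}L_H$ exactly, I will try to read $L_H$ in~\eqref{eq3} as the constant appropriate for the norm used, or equivalently absorb the $\sqrt2$ into $L_H$. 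This bookkeeping of constants is the main point where care is needed, and I expect it to be the main obstacle. For the control term, $\chi_\omega\le1$ and Young's inequality give $\beta_i\langle\chi_\omega v_i,w_i\rangle\le \frac{\sigma}{2}\|w_i\|^2+\frac{\beta_{\max}^2}{2\sigma}\|v_i\|^2$. This is exactly where $\omega$ disappears from the constants.

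Writing $y(t):=\|w(t)\|^2_{(L^2(\Omega))^2}$ and summing over $i$, I expect to obtain
\begin{equation*}
y'(t)\le (2\alpha_{\max}L_H+\sigma)\,y(t)+\frac{\beta_{\max}^2}{\sigma}\|v(t)\|^2_{(L^2(\Omega))^2},\qquad y(0)=0.
\end{equation*}
Gronwall's lemma in integral form then yields
\begin{equation*}
y(t)\le \frac{\beta_{\max}^2}{\sigma}e^{(2\alpha_{\max}L_H+\sigma)t}\|v\|^2_{(L^2(Q))^2}.
\end{equation*}
Taking square roots and the supremum over $t\in[0,T]$ gives~\eqref{eq16} with $L_S=\frac{\beta_{\max}}{\sqrt\sigma}e^{(\alpha_{\max}L_H+\sigma/2)T}$. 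Continuity in time, from Remark~\ref{Rem2}, justifies using the supremum. Every constant used depends only on $\alpha_i,\beta_i,L_H,\sigma,T$ and not on $\omega$, which gives the final claim.
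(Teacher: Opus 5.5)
Your proposal is correct and follows essentially the same route as the paper's Appendix~\ref{app1}: an energy estimate for the difference of the two states, then Young's inequality with parameter $\sigma$ on the control term (where $\chi_\omega\le 1$ removes the dependence on $\omega$), then Gr\"onwall with exactly $\kappa_1=2\alpha_{\max}L_H+\sigma$ and $\kappa_2=\beta_{\max}^2/\sigma$. The constant issue you flag is genuine: with the Euclidean Lipschitz constant, the pointwise bound only gives $2\sqrt{\alpha_n^2+\alpha_l^2}\,L_H\le 2\sqrt{2}\,\alpha_{\max}L_H$ in $\kappa_1$, and the paper passes over this in the same way, so absorbing that factor into $L_H$ (whose norm in~\eqref{eq3} is left unspecified) is the implicit convention there as well.
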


The proof is given in Appendix~\ref{app1}.

\subsection{Existence and characterisation of optimal controls}\label{subsec7}

We now show that the full-domain and subdomain problems each admit a minimiser and recall from~\cite{CDC_paper} the first-order optimality conditions.

\begin{proposition}[{\cite[Theorems~5 and~9]{CDC_paper}}]\label{Prop2}
Consider system~\eqref{eq1} with cost functional~\eqref{eq4} and admissible set~\eqref{eq5}.
\begin{enumerate}
\item[\emph{(i)}] The full-domain problem~\eqref{eq6} and the subdomain problem~\eqref{eq9} each admit at least one minimiser in $U_{\mathrm{ad}}$.
\item[\emph{(ii)}] Let $u^*_\Omega\in U_{\mathrm{ad}}$ be a minimiser of~\eqref{eq6} and let $z^*:=S_\Omega(u^*_\Omega)$ be the corresponding state. There exists a unique adjoint state $p^*=(p^*_n,p^*_l)$ satisfying, backward in $Q$,
\begin{equation}\label{eq17}
\left\{
\begin{aligned}
-\partial_t p^*_n &= D_n\Delta p^*_n
  - \gamma_n p^*_n + \alpha_n\partial_{z_n}H(z^*)\,p^*_n\\
  &+ \alpha_l\partial_{z_n}H(z^*)\,p^*_l,\\
-\partial_t p^*_l &= D_l\Delta p^*_l
  - \gamma_l p^*_l+ \alpha_n\partial_{z_l}H(z^*)\,p^*_n\\
  &+ \alpha_l\partial_{z_l}H(z^*)\,p^*_l,\\
\partial_\nu p^*_n\big|_\Sigma
  &= \partial_\nu p^*_l\big|_\Sigma = 0,\\
p^*_i(\cdot,T)
  &= \mu(z^*_i(\cdot,T)-z_{i,d}),
  \quad i\in\{n,l\}.
\end{aligned}
\right.
\end{equation}
The optimal control is given by the projection formula
\begin{equation}\label{eq18}
u^*_{i,\Omega}
= \mathrm{Proj}_{[0,\,u_{b,i}]}
\!\Big(\!-\frac{\beta_i}{\lambda}\,
p^*_i\Big)
\quad\text{a.e.\ in }Q,
\end{equation}
for $i\in\{n,l\}$.
\end{enumerate}
\end{proposition}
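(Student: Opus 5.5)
The statement has two parts: existence of minimisers for the full-domain and subdomain problems, and the first-order optimality system with projection formula. Both are cited from~\cite{CDC_paper}. The plan is to check that the arguments there carry over to a generic control region $\omega\subseteq\Omega$, and not to rebuild them from scratch.

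\emph{Existence.} I would use the direct method of the calculus of variations. The cost satisfies $J(\cdot;\omega)\geq 0$, so take a minimising sequence $(u^k)\subset U_{\mathrm{ad}}$. The set $U_{\mathrm{ad}}$ is bounded, closed and convex in $(L^2(Q))^2$, hence weakly compact, so we may extract $u^k\rightharpoonup \bar u$ weakly in $(L^2(Q))^2$ with $\bar u\in U_{\mathrm{ad}}$. By \eqref{eq12}--\eqref{eq13}, the states $z^k=S_\omega(u^k)$ are bounded in $H^1(0,T;L^2(\Omega))\cap L^2(0,T;H^2(\Omega))$ uniformly in $k$. This bound comes from the energy estimate, because $H$ is bounded and $\chi_\omega u^k$ is bounded in $L^\infty$.

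The key step is compactness. By Aubin--Lions, $z^k\to\bar z$ strongly in $C([0,T];(L^2(\Omega))^2)$, using the compact embedding $H^2\hookrightarrow H^1\hookrightarrow L^2$ together with the bound on $\partial_t z^k$. This is the main technical point, since it is what lets us pass to the limit in the nonlinearity. By the Lipschitz bound \eqref{eq3}, $H(z^k)\to H(\bar z)$ strongly in $L^2(Q)$. The control term $\chi_\omega u^k$ is linear, so it converges weakly to $\chi_\omega\bar u$. Hence $\bar z=S_\omega(\bar u)$. Since $z^k(T)\to\bar z(T)$ strongly in $L^2$, the tracking term converges. The control term $\int_0^T\!\int_\omega |u|^2$ is convex and continuous, hence weakly lower semicontinuous. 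Together these give $J(\bar u;\omega)\leq\liminf J(u^k;\omega)$. The argument never uses $\omega=\Omega$, which is why part (i) holds for $\omega_\varepsilon$ as well.

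\emph{Optimality system for $\omega=\Omega$.}
\begin{enumerate}
\item \emph{Differentiability of the control-to-state map.} I would show that $S_\Omega$ is Gâteaux differentiable. For a direction $h$, the derivative $w=S_\Omega'(u^*)h$ solves the linearised system
\[
\partial_t w_i=D_i\Delta w_i-\gamma_i w_i+\alpha_i\nabla H(z^*)\cdot w+\beta_i h_i,
\]
with Neumann boundary conditions and zero initial data. The remainder is controlled using $\|D^2H\|_{L^\infty(K)}<\infty$ together with the Lipschitz estimate of Proposition~\ref{Prop1}, which makes it quadratic in $\|h\|$.
\item \emph{Adjoint equation.} The backward adjoint system \eqref{eq17} is linear, with bounded coefficients $\partial_{z_j}H(z^*)\in L^\infty(Q)$. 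Its terminal data $\mu(z^*(T)-z_d)$ lie in $L^\infty$ by \eqref{eq13}. Standard parabolic theory then gives a unique solution $p^*$.
\item \emph{Gradient formula.} Testing the linearised equation with $p^*$ and integrating by parts in space and time, the Neumann conditions remove the boundary terms. The transposed coupling in \eqref{eq17} is exactly what cancels the interior terms. This yields
\[
J'(u^*)h=\int_Q(\beta p^*+\lambda u^*)\cdot h.
\]
\item \emph{Projection formula.} The variational inequality $J'(u^*)(v-u^*)\geq0$ for all $v\in U_{\mathrm{ad}}$ is then evaluated pointwise. Because the constraints are box constraints, it reduces to \eqref{eq18}.
\end{enumerate}

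I expect the main obstacle to be the existence step, namely securing strong compactness of the states so that $H(z^k)\to H(\bar z)$. This requires the bounds in \eqref{eq12} to be uniform over $U_{\mathrm{ad}}$. I would obtain that uniformity from the $L^\infty$ bounds \eqref{eq13} and from $0\leq H\leq1$.
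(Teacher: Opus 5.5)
The paper gives no proof of its own here and simply cites Theorems~5 and~9 of~\cite{CDC_paper}. Your proposal is correct: it reconstructs the standard argument behind such results, namely the direct method with Aubin--Lions compactness for (i), and linearisation, the adjoint identity and the pointwise variational inequality for (ii). Your observation that the existence argument never uses $\omega=\Omega$ is exactly what justifies applying (i) to the subdomain problem.

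Two minor cautions, both inherited from facts the paper asserts rather than proves.

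\emph{Compactness in (i).} The uniform bound in $H^1(0,T;L^2(\Omega))\cap L^2(0,T;H^2(\Omega))$ requires $z_{i,0}\in H^1(\Omega)$, not merely $L^\infty(\Omega)$. This does not break your argument. The standard energy bounds in $L^2(0,T;H^1(\Omega))$, with $\partial_t z^k$ bounded in $L^2(0,T;H^1(\Omega)^*)$, give compactness in $L^2(Q)$ via Aubin--Lions. Weak lower semicontinuity of the tracking term then handles the terminal values.

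\emph{Differentiability in (ii).} G\^ateaux differentiability of $S_\Omega$ needs only $H\in C^1(\R_+^2)$ with bounded gradient, plus dominated convergence. This is safer than invoking $\|D^2H\|_{L^\infty(K)}<\infty$, because $\partial^2_{z_l}H$ blows up as $z_l\to0^+$ (for fixed $z_n>0$) whenever $1<n_l<2$, for example at the numerical value $n_l=1.09$.
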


\begin{remark}\label{Rem3}
Since $z^*\in(L^\infty(Q))^2$ by~\eqref{eq13} and $z_d\in(L^\infty(\Omega))^2$, the terminal data $p^*_i(\cdot,T)=\mu(z^*_i(\cdot,T)-z_{i,d})$ belong to $L^\infty(\Omega)$. The coefficients $\partial_{z_j}H(z^*)$ in~\eqref{eq17} are bounded by $L_H$. The weak maximum principle for parabolic equations~\cite[Theorem~9, p.~367]{Evans2010} then gives $p^*\in(L^\infty(Q))^2$.
\end{remark}

An immediate consequence of the projection formula~\eqref{eq18} is the sign condition
\begin{equation}\label{eq19}
u^*_{i,\Omega}\,p^*_i \leq 0
\quad\text{a.e.\ in }Q
\end{equation}
for each $i\in\{n,l\}$: when $p^*_i>0$ the projection forces $u^*_{i,\Omega}=0$, and when $u^*_{i,\Omega}>0$ one necessarily has $p^*_i\leq 0$. This sign condition plays a key role in the proof of the main result in Section~\ref{sec4}.
\section{Optimality Loss Analysis}\label{sec4}

In this section, we bound the optimality loss $\Delta(\omega_\varepsilon)$ defined in~\eqref{eq10}. The approach is as follows. We take the full-domain optimal control $u^*_\Omega$, set it to zero outside $\omega_\varepsilon$, and compare the resulting cost to $J_\Omega^\star$. This comparison yields an upper bound on $\Delta(\omega_\varepsilon)$ that is quadratic in the subdomain size. From this bound, we derive the minimum subdomain size needed to guarantee a prescribed tolerance, and we compare the two control configurations introduced in Section~\ref{subsec3}.

\subsection{Optimality loss bound and minimum coverage}\label{subsec8}

Let $u^*_\Omega\in U_{\mathrm{ad}}$ be a minimiser of~\eqref{eq6} and $z^*:=S_\Omega(u^*_\Omega)$ the corresponding state. We write
\begin{equation}\label{eq20}
e_\Omega
:= \|z^*(T)-z_d\|_{(L^2(\Omega))^2}
\end{equation}
for the terminal error under full-domain control.
To derive the bound, we introduce a control that serves as a theoretical tool for the analysis. For any $\omega_\varepsilon\subseteq\Omega$, define
\begin{equation}\label{eq21}
\tilde{u}_{\omega_\varepsilon}(x,t)
:= \chi_{\omega_\varepsilon}(x)\,u^*_\Omega(x,t),
\end{equation}
which equals $u^*_\Omega$ on $\omega_\varepsilon$ and zero on $\Omega\setminus\omega_\varepsilon$. The role of $\tilde{u}_{\omega_\varepsilon}$ is to provide a computable upper bound on $\Delta(\omega_\varepsilon)$, since any admissible control gives an upper bound on the optimal cost.

Since $0\leq\tilde{u}_{\omega_\varepsilon,i}\leq u_{b,i}$, we have $\tilde{u}_{\omega_\varepsilon}\in U_{\mathrm{ad}}$. We denote by
\begin{equation}\label{eq22}
z_{\omega_\varepsilon} := S_\Omega(\tilde{u}_{\omega_\varepsilon}),
\qquad \zeta := z_{\omega_\varepsilon} - z^*,
\end{equation}
the state under $\tilde{u}_{\omega_\varepsilon}$ and its difference with respect to $z^*$, and by
\begin{equation}\label{eq23}
r(\omega_\varepsilon)
:= \|\chi_{\Omega\setminus\omega_\varepsilon}\,u^*_\Omega
  \|_{(L^2(Q))^2}
\end{equation}
the residual, which is the $L^2$ norm of the part of $u^*_\Omega$ that lies outside $\omega_\varepsilon$.
Since $\tilde{u}_{\omega_\varepsilon}$ equals $u^*_\Omega$ on $\omega_\varepsilon$ and zero on $\Omega\setminus\omega_\varepsilon$, the control penalty in~\eqref{eq4} takes the same value whether we integrate over $\omega_\varepsilon$ or over $\Omega$, and the tracking term does not depend on the control region. Therefore
\begin{equation}\label{eq24}
J(\tilde{u}_{\omega_\varepsilon};\omega_\varepsilon)
= J(\tilde{u}_{\omega_\varepsilon};\Omega).
\end{equation}
Moreover, since $\tilde{u}_{\omega_\varepsilon}$ is supported on $\omega_\varepsilon$ and belongs to $U_{\mathrm{ad}}$, it is admissible for the subdomain problem~\eqref{eq9}, so $J_{\omega_\varepsilon}^\star \leq J(\tilde{u}_{\omega_\varepsilon};\omega_\varepsilon)$. Combined with~\eqref{eq24}, this gives
\begin{equation}\label{eq25}
\Delta(\omega_\varepsilon)
\leq J(\tilde{u}_{\omega_\varepsilon};\Omega)-J_\Omega^\star.
\end{equation}
The goal is therefore to estimate the right-hand side of~\eqref{eq25} in terms of $r(\omega_\varepsilon)$.
To this end, we subtract the state equations for $z_{\omega_\varepsilon}$ and $z^*$. The difference $\zeta=z_{\omega_\varepsilon}-z^*$ satisfies, for $i\in\{n,l\}$,
\begin{equation}\label{eq26}
\left\{
\begin{aligned}
\partial_t \zeta_i &= D_i\Delta \zeta_i
  - \gamma_i \zeta_i + \alpha_i[H(z_{\omega_\varepsilon})-H(z^*)]\\
  &\quad - \beta_i\chi_{\Omega\setminus\omega_\varepsilon}\,
    u^*_{i,\Omega},\\
\zeta_i(\cdot,0) &= 0,\quad
  \partial_\nu \zeta_i\big|_\Sigma = 0.
\end{aligned}
\right.
\end{equation}
Multiplying each equation by the adjoint $p^*_i$ from Proposition~\ref{Prop2}(ii), integrating over $Q$, summing over $i$, and integrating by parts in time and space using~\eqref{eq17} and the Neumann conditions for both $\zeta_i$ and $p^*_i$, we arrive at the following result.

\begin{proposition}\label{Prop3}
With the notation introduced in~\eqref{eq20}--\eqref{eq23}, define
\begin{align}\label{eq27}
E(\omega_\varepsilon)
&:= \sum_i\int_0^T\!\!\int_{\Omega
  \setminus\omega_\varepsilon}
  \beta_i\,u^*_{i,\Omega}\,
  (-p^*_i)\,dx\,dt,
\end{align}
\begin{align}\label{eq28}
\mathcal{R}(\omega_\varepsilon)
&:= \sum_i\int_0^T\!\!\int_\Omega
  \alpha_i\,p^*_i\bigl[
  H(z_{\omega_\varepsilon})-H(z^*)
\notag\\
&\qquad\qquad
  -\nabla_z H(z^*)\cdot \zeta
  \bigr]\,dx\,dt.
\end{align}
Then
\begin{align}\label{eq29}
\frac{\mu}{2}\bigl(
  &\|z_{\omega_\varepsilon}(T)-z_d\|^2_{(L^2(\Omega))^2}
  - e_\Omega^2\bigr)
\notag\\
&= E(\omega_\varepsilon) + \mathcal{R}(\omega_\varepsilon)
  + \frac{\mu}{2}\|\zeta(T)\|^2_{(L^2(\Omega))^2}.
\end{align}
Moreover, the sign condition~\eqref{eq19} ensures $E(\omega_\varepsilon)\geq 0$, $\mathcal{R}(\omega_\varepsilon)$ has no definite sign, and the following bounds hold:
\begin{align}
\label{eq30}
E(\omega_\varepsilon) &\leq C_E\,r(\omega_\varepsilon)^2,\\
\label{eq31}
\|\zeta(T)\|^2_{(L^2(\Omega))^2}
&\leq L_S^2\,r(\omega_\varepsilon)^2,\\
\label{eq32}
|\mathcal{R}(\omega_\varepsilon)| &\leq C_R\,r(\omega_\varepsilon)^2,
\end{align}
where $L_S$ is the Lipschitz constant from Proposition~\ref{Prop1} and
\begin{align}
\label{eq33}
C_E &:= \max_i\max\Big\{\lambda,\,
\frac{\beta_i\|p^*_i\|_{L^\infty(Q)}}{u_{b,i}}
\Big\},\\
\label{eq34}
C_R &:= \frac{1}{2}\|D^2H\|_{L^\infty(K)}\,
T\,L_S^2\sum_i\alpha_i\|p^*_i\|_{L^\infty(Q)}.
\end{align}
\end{proposition}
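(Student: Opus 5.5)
The plan is to prove the identity~\eqref{eq29} by a duality argument between the linearised state and the adjoint $p^*$. I would then bound the three terms separately, using the projection formula~\eqref{eq18}, Proposition~\ref{Prop1}, and a second-order Taylor expansion of $H$ on the rectangle $K$.

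\emph{Step 1 (identity).} First expand the square:
\[
\|z_{\omega_\varepsilon}(T)-z_d\|^2-e_\Omega^2 = 2\langle \zeta(T),z^*(T)-z_d\rangle+\|\zeta(T)\|^2 .
\]
By the terminal condition in~\eqref{eq17}, $\frac{\mu}{2}\cdot 2\langle\zeta(T),z^*(T)-z_d\rangle=\sum_i\langle\zeta_i(T),p^*_i(T)\rangle$. Since $\zeta(0)=0$, this equals $\int_0^T\frac{d}{dt}\sum_i\langle\zeta_i,p^*_i\rangle\,dt$. I would justify this using the regularity $H^1(0,T;L^2)\cap L^2(0,T;H^2)$ of $\zeta_i$ from~\eqref{eq12}, together with the analogous regularity of $p^*_i$, which follows from the linear backward parabolic theory with bounded coefficients.

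Next write $\frac{d}{dt}\langle\zeta_i,p^*_i\rangle=\langle\partial_t\zeta_i,p^*_i\rangle+\langle\zeta_i,\partial_tp^*_i\rangle$ and insert~\eqref{eq26} and~\eqref{eq17}. The terms combine as follows:
\begin{itemize}
\item The diffusion terms cancel after Green's formula, since both $\zeta_i$ and $p^*_i$ satisfy homogeneous Neumann conditions.
\item The $-\gamma_i$ terms cancel directly.
\item The adjoint coupling terms sum to $-\sum_j\alpha_jp^*_j\,\nabla_zH(z^*)\cdot\zeta$. Added to $\sum_i\alpha_ip^*_i[H(z_{\omega_\varepsilon})-H(z^*)]$, they give exactly $\mathcal R(\omega_\varepsilon)$.
\item The source term gives $-\sum_i\beta_i\chi_{\Omega\setminus\omega_\varepsilon}u^*_{i,\Omega}p^*_i$, which integrates to $E(\omega_\varepsilon)$.
\end{itemize}
This yields~\eqref{eq29}.

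\emph{Step 2 (bounds on $E$ and $\zeta(T)$).} The sign condition~\eqref{eq19} gives a nonnegative integrand, so $E\ge 0$. For~\eqref{eq30} I would prove the pointwise inequality $\beta_iu^*_{i,\Omega}(-p^*_i)\le C_E\,(u^*_{i,\Omega})^2$ by cases on~\eqref{eq18}:
\begin{itemize}
\item Where $u^*_{i,\Omega}=0$, both sides vanish.
\item Where $0<u^*_{i,\Omega}<u_{b,i}$, we have $-p^*_i=\lambda u^*_{i,\Omega}/\beta_i$, so the left side equals $\lambda (u^*_{i,\Omega})^2$.
\item Where $u^*_{i,\Omega}=u_{b,i}$, we have $\beta_i(-p^*_i)\le\beta_i\|p^*_i\|_{L^\infty(Q)}$. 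Writing $u_{b,i}=u^*_{i,\Omega}$, this gives a bound of $\frac{\beta_i\|p^*_i\|_\infty}{u_{b,i}}(u^*_{i,\Omega})^2$.
\end{itemize}
Integrating over $(\Omega\setminus\omega_\varepsilon)\times(0,T)$ gives $C_Er^2$; here Remark~\ref{Rem3} guarantees $\|p^*_i\|_{L^\infty(Q)}<\infty$.

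For~\eqref{eq31}, apply Proposition~\ref{Prop1} with $\omega=\Omega$, $u^1=\tilde u_{\omega_\varepsilon}$ and $u^2=u^*_\Omega$. Since $u^1-u^2=-\chi_{\Omega\setminus\omega_\varepsilon}u^*_\Omega$, this gives $\sup_t\|\zeta(t)\|\le L_Sr$, and in particular the bound at $t=T$.

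\emph{Step 3 (remainder).} Both $z_{\omega_\varepsilon}$ and $z^*$ take values in the convex set $K$ by~\eqref{eq13}. Taylor's formula with integral remainder along the segment between them then gives, pointwise,
\[
|H(z_{\omega_\varepsilon})-H(z^*)-\nabla_zH(z^*)\cdot\zeta|\le\tfrac12\|D^2H\|_{L^\infty(K)}|\zeta|^2 .
\]
Multiplying by $\alpha_i|p^*_i|\le\alpha_i\|p^*_i\|_\infty$, summing over $i$ and integrating, I would use $\int_0^T\|\zeta(t)\|^2dt\le T\sup_t\|\zeta(t)\|^2\le TL_S^2r^2$ from Step 2. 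This gives~\eqref{eq32} with $C_R$ as in~\eqref{eq34}.

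The main obstacle I expect is technical rather than conceptual. The Taylor step needs $H$ to be $C^2$ along segments in $K$, but $K$ touches the boundary of the quadrant, where $H$ is only known to be smooth on the open set $(0,\infty)^2$. I would handle this using the stated finiteness of $\|D^2H\|_{L^\infty(K)}$ and, if needed, by approximating segment endpoints from the interior and passing to the limit by continuity of $H$ and $\nabla H$. The second point needing care is the rigorous justification of the time integration by parts, which requires the adjoint regularity $p^*_i\in H^1(0,T;L^2)\cap L^2(0,T;H^2)$.
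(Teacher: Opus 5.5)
Your proposal is correct and follows the paper's proof essentially step for step. It uses the same expansion of the terminal error plus adjoint duality for \eqref{eq29}, the same three-case projection argument for \eqref{eq30}, Proposition~\ref{Prop1} with $\omega=\Omega$ for \eqref{eq31}, and the same Taylor bound combined with $\int_Q|\zeta|^2\le T L_S^2 r^2$ for \eqref{eq32}.

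One minor technical refinement concerns the time integration by parts. Since $z_d$ is only in $L^\infty(\Omega)$, $p^*(T)$ need not lie in $H^1(\Omega)$, so $p^*$ is in general not in $H^1(0,T;L^2(\Omega))\cap L^2(0,T;H^2(\Omega))$. The integration by parts is better justified in $W(0,T)=L^2(0,T;H^1(\Omega))\cap H^1(0,T;H^1(\Omega)^*)$, which suffices because $\zeta$ is a strong solution.
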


The proof establishes the identity~\eqref{eq29} by expanding $\|z_{\omega_\varepsilon}(T)-z_d\|^2$ and using the integration by parts identity between $\zeta$ and $p^*$, then bounds each term using the projection formula~\eqref{eq18}, the Lipschitz continuity of $S_\omega$ from Proposition~\ref{Prop1}, and the $C^2$ regularity of $H$ on $K$. The details are given in Appendix~\ref{app2}.

Combining Proposition~\ref{Prop3} with the cost decomposition, we obtain the main result. Define
\begin{equation}\label{eq35}
C_r := C_E + C_R + \frac{\mu}{2}L_S^2.
\end{equation}
Since $C_E\geq\lambda$, we have $C_r>\lambda/2$.

\begin{theorem}\label{Th1}
Consider system~\eqref{eq1} with cost functional~\eqref{eq4}. Let $u^*_\Omega\in U_{\mathrm{ad}}$ be a minimiser of~\eqref{eq6}, $\Delta(\omega_\varepsilon):=J_{\omega_\varepsilon}^\star-J_\Omega^\star$ the optimality loss, and $r(\omega_\varepsilon):=\|\chi_{\Omega\setminus\omega_\varepsilon}\,u^*_\Omega\|_{(L^2(Q))^2}$ the residual. Then, for any $\omega_\varepsilon\subseteq\Omega$,
\begin{equation}\label{eq36}
0\leq\Delta(\omega_\varepsilon)
\leq\Big(C_r-\frac{\lambda}{2}\Big)
r(\omega_\varepsilon)^2,
\end{equation}
where $C_r$ is given by~\eqref{eq35}.
\end{theorem}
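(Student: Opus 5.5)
The lower bound $\Delta(\omega_\varepsilon)\geq 0$ is already recorded in~\eqref{eq10}: controls supported on $\omega_\varepsilon$ are admissible for the full-domain problem. For the upper bound we start from~\eqref{eq25}, so it suffices to show
\[
J(\tilde u_{\omega_\varepsilon};\Omega)-J_\Omega^\star\leq\Big(C_r-\tfrac{\lambda}{2}\Big)r(\omega_\varepsilon)^2 .
\]
We split this difference into a tracking part and a control-penalty part.

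\textbf{Tracking part.} It equals $\frac{\mu}{2}\bigl(\|z_{\omega_\varepsilon}(T)-z_d\|^2-e_\Omega^2\bigr)$. Proposition~\ref{Prop3} rewrites it through identity~\eqref{eq29} as $E(\omega_\varepsilon)+\mathcal R(\omega_\varepsilon)+\frac{\mu}{2}\|\zeta(T)\|^2$. Applying~\eqref{eq30}--\eqref{eq32} term by term bounds it by
\[
\Big(C_E+C_R+\tfrac{\mu}{2}L_S^2\Big)r(\omega_\varepsilon)^2=C_r\,r(\omega_\varepsilon)^2 .
\]
For $\mathcal R$ we use the absolute value bound, since its sign is indefinite.

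\textbf{Penalty part.} Because $\tilde u_{\omega_\varepsilon}=\chi_{\omega_\varepsilon}u^*_\Omega$, the integrands split pointwise over $\omega_\varepsilon$ and $\Omega\setminus\omega_\varepsilon$. Hence
\[
\frac{\lambda}{2}\|\tilde u_{\omega_\varepsilon}\|^2_{(L^2(Q))^2}-\frac{\lambda}{2}\|u^*_\Omega\|^2_{(L^2(Q))^2}
=-\frac{\lambda}{2}\|\chi_{\Omega\setminus\omega_\varepsilon}u^*_\Omega\|^2_{(L^2(Q))^2}
=-\frac{\lambda}{2}r(\omega_\varepsilon)^2 .
\]
This is an exact identity, not merely an estimate, and it is where the $-\lambda/2$ in~\eqref{eq36} comes from.

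Adding the two parts gives~\eqref{eq36}. The remark after~\eqref{eq35} that $C_r>\lambda/2$ guarantees the resulting constant is positive, so the bound is meaningful.

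All the analytic content sits in Proposition~\ref{Prop3}, which we take as given, so the theorem itself is essentially bookkeeping. The only step needing care is checking that~\eqref{eq29} compares exactly the tracking terms of $J(\tilde u_{\omega_\varepsilon};\Omega)$ and $J(u^*_\Omega;\Omega)=J^\star_\Omega$. This holds because both states solve~\eqref{eq1} with $\omega=\Omega$: by~\eqref{eq22}, $z_{\omega_\varepsilon}=S_\Omega(\tilde u_{\omega_\varepsilon})$, which coincides with $S_{\omega_\varepsilon}(\tilde u_{\omega_\varepsilon})$ since $\chi_{\omega_\varepsilon}\tilde u_{\omega_\varepsilon}=\tilde u_{\omega_\varepsilon}$. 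With that consistency confirmed, the three bounds are simply added.
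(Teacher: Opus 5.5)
Your proposal is correct and follows the paper's proof essentially step for step. Both combine the exact penalty identity $\|\tilde u_{\omega_\varepsilon}\|^2_{(L^2(Q))^2}=\|u^*_\Omega\|^2_{(L^2(Q))^2}-r(\omega_\varepsilon)^2$ with~\eqref{eq25}, then substitute identity~\eqref{eq29} and add the bounds~\eqref{eq30}--\eqref{eq32}. Your explicit check that $S_\Omega(\tilde u_{\omega_\varepsilon})=S_{\omega_\varepsilon}(\tilde u_{\omega_\varepsilon})$ is a small but welcome addition, since the paper leaves this implicit in~\eqref{eq24}.
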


\begin{proof}
Since $\tilde{u}_{\omega_\varepsilon}$ equals $u^*_\Omega$ on $\omega_\varepsilon$ and zero outside,
\begin{equation}\label{eq37}
\|\tilde{u}_{\omega_\varepsilon}\|^2_{(L^2(Q))^2}
= \|u^*_\Omega\|^2_{(L^2(Q))^2}
  - r(\omega_\varepsilon)^2,
\end{equation}
so the control penalty in~\eqref{eq4} decreases by $\frac{\lambda}{2}r(\omega_\varepsilon)^2$. From~\eqref{eq25},
\begin{equation}\label{eq38}
\Delta(\omega_\varepsilon)
\leq \frac{\mu}{2}\bigl(
  \|z_{\omega_\varepsilon}(T)-z_d\|^2
  - e_\Omega^2\bigr)
  - \frac{\lambda}{2}r(\omega_\varepsilon)^2.
\end{equation}
Substituting~\eqref{eq29} into~\eqref{eq38} and using the bounds~\eqref{eq30}--\eqref{eq32},
\begin{align}\label{eq39}
\Delta(\omega_\varepsilon)
&\leq E(\omega_\varepsilon)+|\mathcal{R}(\omega_\varepsilon)|
  +\frac{\mu}{2}\|\zeta(T)\|^2
  -\frac{\lambda}{2}r(\omega_\varepsilon)^2
\notag\\
&\leq \Big(C_r
  -\frac{\lambda}{2}\Big)r(\omega_\varepsilon)^2.
\end{align}
Together with $\Delta(\omega_\varepsilon)\geq 0$, this gives~\eqref{eq36}.
\end{proof}

Theorem~\ref{Th1} bounds $\Delta(\omega_\varepsilon)$ in terms of the residual $r(\omega_\varepsilon)$. We now translate this into conditions on the subdomain size that guarantee $\Delta\leq\eta$ for a given tolerance $\eta>0$. To distinguish the two control configurations, we write
\begin{equation}\label{eq40}
r_1(\varepsilon)
:=\|\chi_{\Omega\setminus\omega_\varepsilon}
\,u^*_\Omega\|_{(L^2(Q))^2}
\end{equation}
for the single-subdomain residual and
\begin{equation}\label{eq41}
r_M(\varepsilon)
:=\|\chi_{\Omega\setminus\omega_\varepsilon}
\,u^*_\Omega\|_{(L^2(Q))^2}
\end{equation}
for the multi-subdomain residual, where in~\eqref{eq41} the subdomain $\omega_\varepsilon$ is the union~\eqref{eq8}.

\begin{corollary}\label{Cor1}
Under the hypotheses of Theorem~\ref{Th1}, let $U_b:=\max_i u_{b,i}$.
\begin{enumerate}
\item[\emph{(i)}] For the single subdomain~\eqref{eq7}, $r_1(\varepsilon)^2 \leq U_b^2\,T(L^2-4\varepsilon^2)$, so $\Delta(\omega_\varepsilon)\leq\eta$ holds whenever
\begin{equation}\label{eq42}
\varepsilon\geq\varepsilon_\eta
:=\frac{1}{2}\sqrt{
\max\Big\{0,\,
L^2-\frac{\eta}
{(C_r-\frac{\lambda}{2})TU_b^2}
\Big\}}.
\end{equation}
\item[\emph{(ii)}] For the multi-subdomain configuration~\eqref{eq8}, $r_M(\varepsilon)^2 \leq U_b^2\,T(L^2-4\sum_k\varepsilon_k^2)$, so $\Delta(\omega_\varepsilon)\leq\eta$ holds whenever
\begin{equation}\label{eq43}
\sum_{k=1}^M\varepsilon_k^2
\geq\frac{1}{4}\Big(L^2
  -\frac{\eta}
  {(C_r-\frac{\lambda}{2})TU_b^2}
\Big).
\end{equation}
\end{enumerate}
\end{corollary}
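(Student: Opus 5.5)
The plan is to combine Theorem~\ref{Th1} with a crude pointwise bound on the residual, using only the box constraints in $U_{\mathrm{ad}}$. Since $u^*_\Omega\in U_{\mathrm{ad}}$, we have $0\leq u^*_{i,\Omega}\leq u_{b,i}\leq U_b$ a.e.\ in $Q$. Hence, for any measurable $\omega_\varepsilon\subseteq\Omega$,
\[
r(\omega_\varepsilon)^2=\sum_{i\in\{n,l\}}\int_0^T\!\!\int_{\Omega\setminus\omega_\varepsilon}|u^*_{i,\Omega}|^2\,dx\,dt\leq C\,U_b^2\,T\,|\Omega\setminus\omega_\varepsilon|,
\]
where $|\Omega\setminus\omega_\varepsilon|=L^2-|\omega_\varepsilon|$.

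In case~(i), $|\omega_\varepsilon|=4\varepsilon^2$ because the square lies inside $\Omega$. In case~(ii), the squares are pairwise disjoint and contained in $\Omega$ (Remark~\ref{Rem1}), so $|\omega_\varepsilon|=4\sum_k\varepsilon_k^2$. This gives the stated residual bounds, up to the factor $C$ discussed below.

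Next I insert the residual bound into~\eqref{eq36}. Since $C_r-\lambda/2>0$, I get $\Delta\leq (C_r-\frac{\lambda}{2})TU_b^2(L^2-|\omega_\varepsilon|)$. This is at most $\eta$ exactly when
\[
|\omega_\varepsilon|\geq L^2-\frac{\eta}{(C_r-\frac{\lambda}{2})TU_b^2}.
\]
In case~(ii) this is~\eqref{eq43} after dividing by $4$. In case~(i) it reads $4\varepsilon^2\geq L^2-\eta/((C_r-\lambda/2)TU_b^2)$. When the right-hand side is negative the condition is automatic, which is why the $\max\{0,\cdot\}$ appears; taking square roots then gives~\eqref{eq42}. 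I would also note the compatibility requirement $\varepsilon\leq L/2$: since $\varepsilon_\eta\leq L/2$, the range $[\varepsilon_\eta, L/2]$ is nonempty.

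The step I expect to be the main obstacle is the constant in the residual bound. The sum over the two species naively gives $\sum_i u_{b,i}^2\leq 2U_b^2$, so $C=2$, whereas the statement has no factor $2$. My first attempt would be to check whether the intended norm or an additional structural fact removes this factor. One candidate is the projection formula~\eqref{eq18}, but that does not obviously force only one component to be active. Failing that, I would prove the bound with $\sum_i u_{b,i}^2$ (equivalently $2U_b^2$) in place of $U_b^2$ and carry the factor through~\eqref{eq42}--\eqref{eq43}. The threshold logic and the monotonicity in $\varepsilon$ are unaffected by the constant, so only the explicit formula changes.
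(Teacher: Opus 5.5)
Your route is the paper's. You bound the residual pointwise using only the box constraints, insert the result into \eqref{eq36}, and solve for the controlled area, with $\max\{0,\cdot\}$ covering the vacuous case. Your worry about the constant is justified, and the paper's proof makes exactly the slip you anticipated. It goes from $|u^*_{i,\Omega}|\leq u_{b,i}$ straight to $r_1(\varepsilon)^2\leq U_b^2T(L^2-4\varepsilon^2)$. But with $\|f\|^2_{(L^2(Q))^2}=\|f_1\|^2_{L^2(Q)}+\|f_2\|^2_{L^2(Q)}$, that pointwise bound only yields $(u_{b,n}^2+u_{b,l}^2)\,T\,(L^2-4\varepsilon^2)\leq 2U_b^2T(L^2-4\varepsilon^2)$, and the same holds for $r_M$.

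No structural fact removes the factor, so you can stop looking for one. The projection formula \eqref{eq18} acts componentwise, and both controls sit at their upper bounds wherever $p^*_n\leq-\lambda u_{b,n}/\beta_n$ and $p^*_l\leq-\lambda u_{b,l}/\beta_l$ hold simultaneously. This can happen on all of $Q$. Take a constant target far above the rectangle $K$ and a short horizon. The adjoint then stays close to its terminal value $\mu(z^*(T)-z_d)$, which is very negative in both components. So $u^*_\Omega\equiv(u_{b,n},u_{b,l})$ and $r_1(\varepsilon)^2=(u_{b,n}^2+u_{b,l}^2)T(L^2-4\varepsilon^2)$. This exceeds the printed bound whenever $4\varepsilon^2<L^2$, by a factor $2$ when $u_{b,n}=u_{b,l}$ as in Section~\ref{sec5}.

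Hence the printed residual bounds fail in general, and \eqref{eq42}--\eqref{eq43} are not established as written. Your fallback is the correct form of the corollary: replace $U_b^2$ by $u_{b,n}^2+u_{b,l}^2$ (or $2U_b^2$) in the residual bounds and in both thresholds. The rest of your argument then proves it.

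One small correction: for (ii) you cite Remark~\ref{Rem1}, which rests on Assumption~\ref{As1}, and that is not a hypothesis of the corollary. Disjointness, containment in $\Omega$, and $|\omega_\varepsilon|=4\sum_k\varepsilon_k^2$ are part of the configuration \eqref{eq8} itself. Likewise, $|\omega_\varepsilon|=4\varepsilon^2$ in (i) uses the proviso after \eqref{eq7} that $x_0$ lies at $\ell^\infty$-distance at least $\varepsilon$ from $\partial\Omega$.
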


\begin{proof}
Since $|u^*_{i,\Omega}|\leq u_{b,i}$ a.e., the residual for the single subdomain satisfies
\begin{equation}\label{eq44}
r_1(\varepsilon)^2
\leq U_b^2\,T(L^2-4\varepsilon^2),
\end{equation}
and for the multi-subdomain
\begin{equation}\label{eq45}
r_M(\varepsilon)^2
\leq U_b^2\,T\Big(L^2
-4\sum_k\varepsilon_k^2\Big).
\end{equation}
Substituting~\eqref{eq44} and~\eqref{eq45} into~\eqref{eq36} and solving for $\varepsilon$ and $\sum_k\varepsilon_k^2$ gives~\eqref{eq42} and~\eqref{eq43}.
\end{proof}

Condition~\eqref{eq43} depends only on the total control area $4\sum_k\varepsilon_k^2$, not on $M$ or the placement of the subdomains. This is a property of the upper bound, not of the true costs.

\subsection{Single versus multi-subdomain bound comparison}\label{subsec9}

We now compare the upper bounds on $\Delta$ for the two control configurations at equal total area. Since $C_r-\lambda/2$ does not depend on $\omega_\varepsilon$, the bound~\eqref{eq36} is tighter for the configuration with the smaller residual. From~\eqref{eq23} and~\eqref{eq11},
\begin{equation}\label{eq46}
r(\omega_\varepsilon)^2
= \int_{\Omega\setminus\omega_\varepsilon}
  \varphi(x)\,dx,
\end{equation}
so a smaller residual means that $\omega_\varepsilon$ captures more of the spatial energy density $\varphi$. Fix a total area $A\in(0,4\delta^2)$ and set $\varepsilon:=\sqrt{A}/2$, so that the single subdomain has area $A$. For the multi-subdomain configuration, set $\varepsilon_k:=\varepsilon/\sqrt{M}$ for each $k$, so that $|\omega_\varepsilon|=A$ in both cases.

\begin{theorem}\label{Th2}
Consider system~\eqref{eq1}
under Assumption~\ref{As1}.
With $A$, $\varepsilon$, $c_0$, $c_1$ and
$\varepsilon_k$ as above, for any
$x_0\in\Omega$ such that
$x_0\notin\bigcup_k
B_\infty(x^{(k)},\delta+\varepsilon)$,
\begin{equation}\label{eq47}
r_1(\varepsilon)^2
  - r_M(\varepsilon)^2
\geq \Big[\frac{(M-1)\,c_0}{M}
  - c_1\Big]\,A > 0.
\end{equation}
In particular, the multi-subdomain
configuration yields a strictly
tighter upper bound on $\Delta$
than such a single subdomain of the
same area.
\end{theorem}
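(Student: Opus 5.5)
The plan is to reduce everything to the representation~\eqref{eq46} of the squared residual as an integral of the spatial energy density $\varphi$, and then compare the two control regions using the pointwise bounds (H3) and (H4). Write $\omega^{(1)}:=B_\infty(x_0,\varepsilon)\cap\Omega$ for the single square and $\omega^{(M)}:=\bigcup_k B_\infty(x^{(k)},\varepsilon_k)$ for the union of the $M$ small squares. As in~\eqref{eq7}, we assume $x_0$ lies at $\ell^\infty$-distance at least $\varepsilon$ from $\partial\Omega$, so that $|\omega^{(1)}|=A$.

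\textbf{Step 1 (reduction).} Since $\varphi\in L^1(\Omega)$ (indeed $\varphi\le T\sum_i u_{b,i}^2$), identity~\eqref{eq46} gives $r(\omega)^2=\int_\Omega\varphi\,dx-\int_\omega\varphi\,dx$ for both configurations. Subtracting,
\begin{equation*}
r_1(\varepsilon)^2-r_M(\varepsilon)^2=\int_{\omega^{(M)}}\varphi\,dx-\int_{\omega^{(1)}}\varphi\,dx .
\end{equation*}
So it suffices to bound the first integral from below and the second from above.

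\textbf{Step 2 (lower bound on the multi-subdomain).} Each half-side satisfies $\varepsilon_k=\varepsilon/\sqrt{M}<\varepsilon<\delta$. Hence $B_\infty(x^{(k)},\varepsilon_k)\subset B_\infty(x^{(k)},\delta)\subset\Omega$ by (H1). By (H2) these small squares are pairwise disjoint, so $|\omega^{(M)}|=\sum_k4\varepsilon_k^2=A$. Applying (H3) on each square gives $\int_{\omega^{(M)}}\varphi\,dx\ge c_0A$.

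\textbf{Step 3 (upper bound on the single square).} This is the only step with any geometry, and I expect it to be the main (though mild) obstacle. I will show that $\omega^{(1)}$ is disjoint from every $B_\infty(x^{(k)},\delta)$. Let $\xi\in\omega^{(1)}$ and suppose $\xi\in B_\infty(x^{(k)},\delta)$. The $\ell^\infty$ triangle inequality then gives $d_\infty(x_0,x^{(k)})<\varepsilon+\delta$, i.e.\ $x_0\in B_\infty(x^{(k)},\delta+\varepsilon)$, which contradicts the hypothesis on $x_0$. Therefore $\omega^{(1)}\subset\Omega\setminus\bigcup_kB_\infty(x^{(k)},\delta)$, and (H4) yields $\int_{\omega^{(1)}}\varphi\,dx\le c_1A$.

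\textbf{Step 4 (conclusion).} Combining Steps 1–3,
\begin{equation*}
r_1(\varepsilon)^2-r_M(\varepsilon)^2\ge(c_0-c_1)A\ge\Big[\frac{(M-1)c_0}{M}-c_1\Big]A,
\end{equation*}
where the last inequality uses $c_0\ge(M-1)c_0/M$. Strict positivity follows from $c_1<c_0(M-1)/M$ in (H4) together with $A>0$. Finally, Theorem~\ref{Th1} gives the bound $(C_r-\lambda/2)r(\omega_\varepsilon)^2$, and the constant $C_r-\lambda/2>0$ does not depend on $\omega_\varepsilon$. A strictly smaller residual therefore gives a strictly smaller bound, which is the ``in particular'' claim.
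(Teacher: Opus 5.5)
Your proposal is correct and follows essentially the same route as the paper. Both arguments reduce via \eqref{eq46} to comparing $\int\varphi$ over the two regions, bound the multi-subdomain integral below by $c_0A$ using (H1)--(H3), bound the single-square integral above by $c_1A$ using the $\ell^\infty$ triangle inequality and (H4), and finish with $c_0\geq (M-1)c_0/M$. The only cosmetic difference is that you subtract integrals over the full regions rather than over the set differences $\omega_\varepsilon^M\setminus\omega_\varepsilon^1$ and $\omega_\varepsilon^1\setminus\omega_\varepsilon^M$; this is equivalent here because your Step 3 shows the two regions are disjoint.
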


\begin{proof}
From~\eqref{eq46} and
$\varphi\geq 0$,
\begin{align}\label{eq48}
r_1(\varepsilon)^2
  &- r_M(\varepsilon)^2
\notag\\
&= \int_{\omega_{\varepsilon}^M
  \setminus\omega_\varepsilon^1}
  \varphi\,dx
  - \int_{\omega_\varepsilon^1
  \setminus\omega_{\varepsilon}^M}
  \varphi\,dx,
\end{align}
where we write $\omega_\varepsilon^1$
for the single subdomain and
$\omega_\varepsilon^M$ for the
multi-subdomain configuration. We
bound the two integrals separately.
For the first integral,
since $\varepsilon_k\leq\varepsilon
<\delta$, each small subdomain
satisfies
$\omega_{\varepsilon_k}(x^{(k)})
\subset B_\infty(x^{(k)},\delta)$.
By~(H2), the single subdomain
$\omega_\varepsilon^1(x_0)$ can
intersect at most one ball
$B_\infty(x^{(k)},\delta)$: if it
intersected two, the triangle
inequality would give
$d_\infty(x^{(k)},x^{(j)})
<2\delta+2\varepsilon$,
contradicting~(H2). Since
$x_0\notin\bigcup_k
B_\infty(x^{(k)},\delta+\varepsilon)$,
the single subdomain does not
intersect any ball
$B_\infty(x^{(k)},\delta)$, so all
$M$ small subdomains lie in
$\omega_\varepsilon^M
\setminus\omega_\varepsilon^1$.
For each of these, (H3) gives
\begin{equation}\label{eq49}
\int_{\omega_{\varepsilon_k}
(x^{(k)})}\!\!\varphi\,dx
\geq c_0\cdot 4\varepsilon_k^2
=c_0\,\frac{A}{M}.
\end{equation}
Summing over all $M$ subdomains,
\begin{equation}\label{eq50}
\int_{\omega_{\varepsilon}^M
\setminus\omega_\varepsilon^1}
\varphi\,dx
\geq c_0\,A.
\end{equation}

For the second integral, since
$x_0\notin\bigcup_k
B_\infty(x^{(k)},\delta+\varepsilon)$
and $\omega_\varepsilon^1(x_0)$ has
half-side $\varepsilon$, the single
subdomain does not intersect any ball
$B_\infty(x^{(k)},\delta)$. Since
$\omega_\varepsilon^M\subset
\bigcup_k B_\infty(x^{(k)},\delta)$,
we have
$\omega_\varepsilon^1
\setminus\omega_\varepsilon^M
=\omega_\varepsilon^1
\subset\Omega\setminus
\bigcup_k B_\infty(x^{(k)},\delta)$.
By~(H4),
\begin{equation}\label{eq51}
\int_{\omega_\varepsilon^1
\setminus\omega_\varepsilon^M}
\varphi\,dx
\leq c_1\,|\omega_\varepsilon^1|
= c_1\,A.
\end{equation}

Substituting~\eqref{eq50}
and~\eqref{eq51} into~\eqref{eq48} gives
$r_1^2-r_M^2\geq c_0 A - c_1 A
\geq [(M-1)c_0/M - c_1]\,A$,
which is positive by~(H4).
\end{proof}

\section{Numerical Results}\label{sec5}

We illustrate the results of Section~\ref{sec4} on the Nodal--Lefty system~\eqref{eq1} with experimental parameters from~\cite{Sekine2018} listed in Table~\ref{tab1}. The domain is $\Omega=(0,800)^2\;\mu$m$^2$, the control bounds are $u_{b,n}=u_{b,l}=1$, and the regularisation weight is $\lambda=10^{-6}$. The state equation is discretised on a uniform $81\times 81$ grid using the Crank--Nicolson scheme~\cite{CrankNicolson1947}, and each optimal control problem is solved by the nonlinear conjugate gradient method with Polak--Ribi\`ere updates~\cite{PolakRibiere1969}, see~\cite{CDC_paper} for more details.

\begin{table}[t]
\centering
\caption{Model parameters from~\cite{Sekine2018}.}
\label{tab1}
\begin{tabular}{@{}llll@{}}
\toprule
$D_n$ & $1.96$ & $D_l$ & $56.39$\\
$\gamma_n$ & $2.37\!\times\!10^{-3}$
& $\gamma_l$ & $5.65\!\times\!10^{-3}$\\
$n_n$ & $2.63$ & $n_l$ & $1.09$\\
$k_n$ & $9.28$ & $k_l$ & $14.96$\\
$\beta_n$ & $0.8$ & $\beta_l$ & $4.0$\\
\bottomrule
\end{tabular}
\end{table}

With the parameters in Table~\ref{tab1} and production rates $\alpha_n=0.8$, $\alpha_l=4.0$\;nM\,min$^{-1}$, the uncontrolled system develops a stripe pattern from randomly perturbed initial data through Turing instability~\cite{Ouchdiri2025_MathBio}. We consider two targets: Target~1 (spotted, $\alpha_n=0.5$, $\alpha_l=4.0$, $T=2000$\;min) and Target~2 (radial, $\alpha_n=1.5$, $\alpha_l=8$, $T=5000$\;min); see Figure~\ref{fig2} and~\cite{Ouchdiri2025_MathBio,CDC_paper} for details.

For each target, we solve the full-domain problem~\eqref{eq6}, then the subdomain problem~\eqref{eq9} for six coverage levels. The single subdomain $\omega_\varepsilon$ is a square centred at $L/2$; the multi-subdomain $\omega_\varepsilon$ consists of $M=9$ equal squares in a $3\times 3$ partition of~$\Omega$. Results are reported in terms of the coverage $|\omega_\varepsilon|/|\Omega|\times 100\%$. Performance is measured by
\begin{equation}\label{eq52}
e_{\mathrm{rel}}(\omega_\varepsilon)
:= \frac{\|z_{\omega_\varepsilon}(\cdot,T)-z_d\|_{(L^2(\Omega))^2}}
        {\|z_d\|_{(L^2(\Omega))^2}}.
\end{equation}

\begin{figure}[t]
\centering
\begin{subfigure}[b]{0.9\columnwidth}
  \centering
  \includegraphics[width=0.6\linewidth]{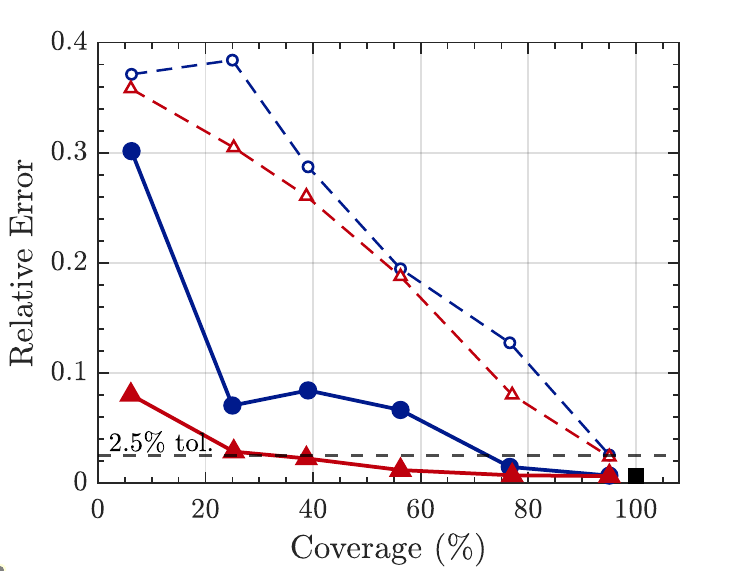}
  \caption{Target~1.}
  \label{fig1a}
\end{subfigure}\\[4pt]
\begin{subfigure}[b]{0.9\columnwidth}
  \centering
  \includegraphics[width=0.6\linewidth]{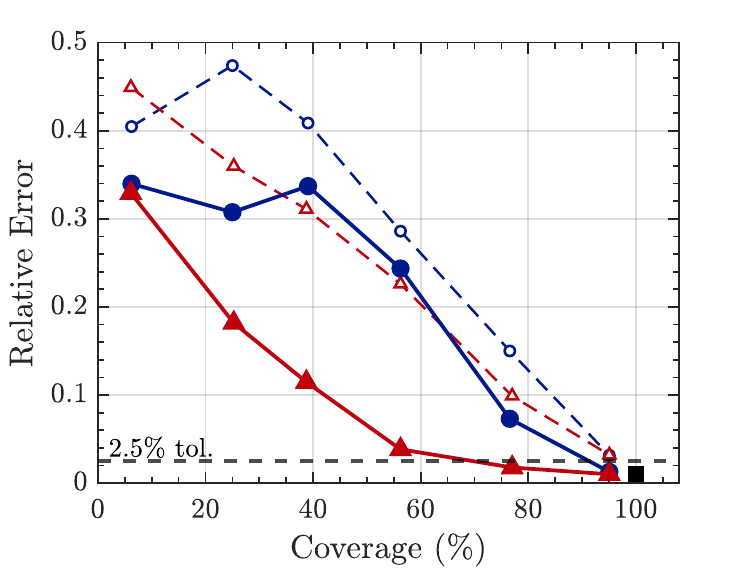}
  \caption{Target~2.}
  \label{fig1b}
\end{subfigure}
\caption{Relative error $e_{\mathrm{rel}}$ as a function of coverage for the single subdomain (blue) and the multi-subdomain with $M=9$ (red). Solid lines: optimal subdomain control $u^*_{\omega_\varepsilon}$; dashed lines: truncated control $\tilde{u}_{\omega_\varepsilon}$ from~\eqref{eq21}. The black square marks the full-domain control ($\omega_\varepsilon=\Omega$). The horizontal black dashed line indicates the $2.5\%$ tolerance.}
\label{fig1}
\end{figure}

Figure~\ref{fig1} reports $e_{\mathrm{rel}}$ as a function of the coverage. At every matched coverage, the multi-subdomain configuration outperforms the single subdomain, confirming Theorem~\ref{Th2}. For Target~1, the multi-subdomain reaches the $2.5\%$ tolerance at roughly $39\%$ coverage, compared to $77\%$ for the single subdomain; for Target~2, the thresholds are $77\%$ and $95\%$ respectively.
Figures~\ref{fig2} and~\ref{fig3} show the final Nodal concentration and the time evolution of the control input for selected configurations. Near $t=T$, the control concentrates around the peaks of the target pattern, reflecting the spatial structure of $\varphi$ from~\eqref{eq11}.

\begin{figure}[t]
\centering
\begin{subfigure}[b]{0.85\columnwidth}
  \centering
  \includegraphics[width=0.85\linewidth]{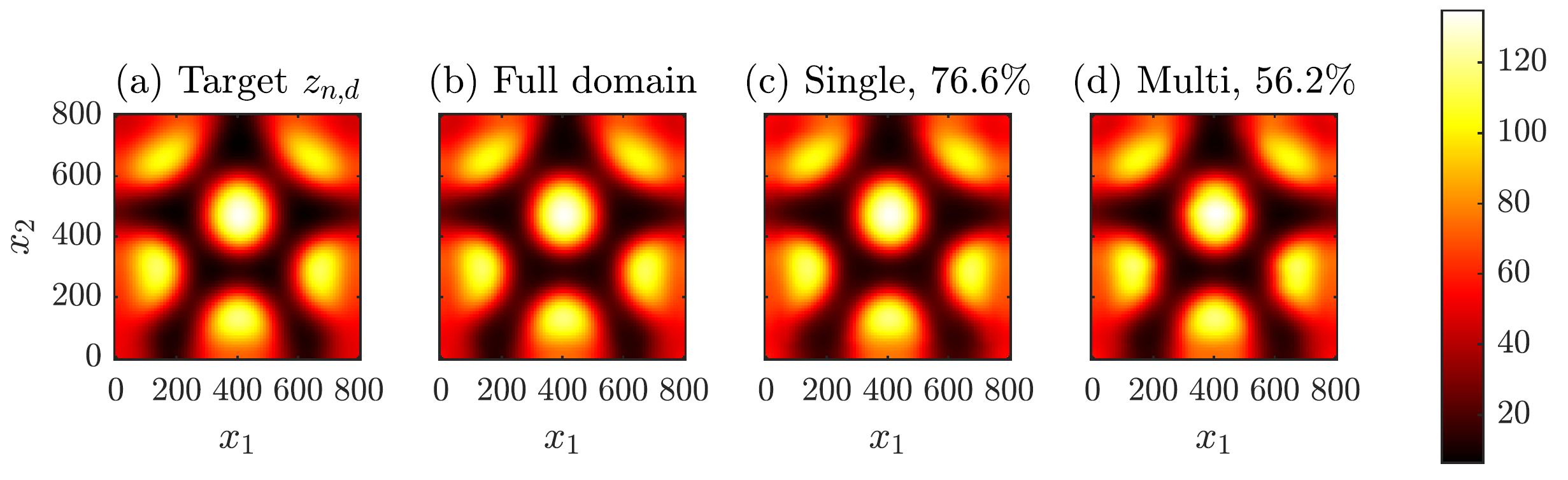}
  \caption{Target~1.}
  \label{fig2a}
\end{subfigure}\\[4pt]
\begin{subfigure}[b]{0.85\columnwidth}
  \centering
  \includegraphics[width=0.85\linewidth]{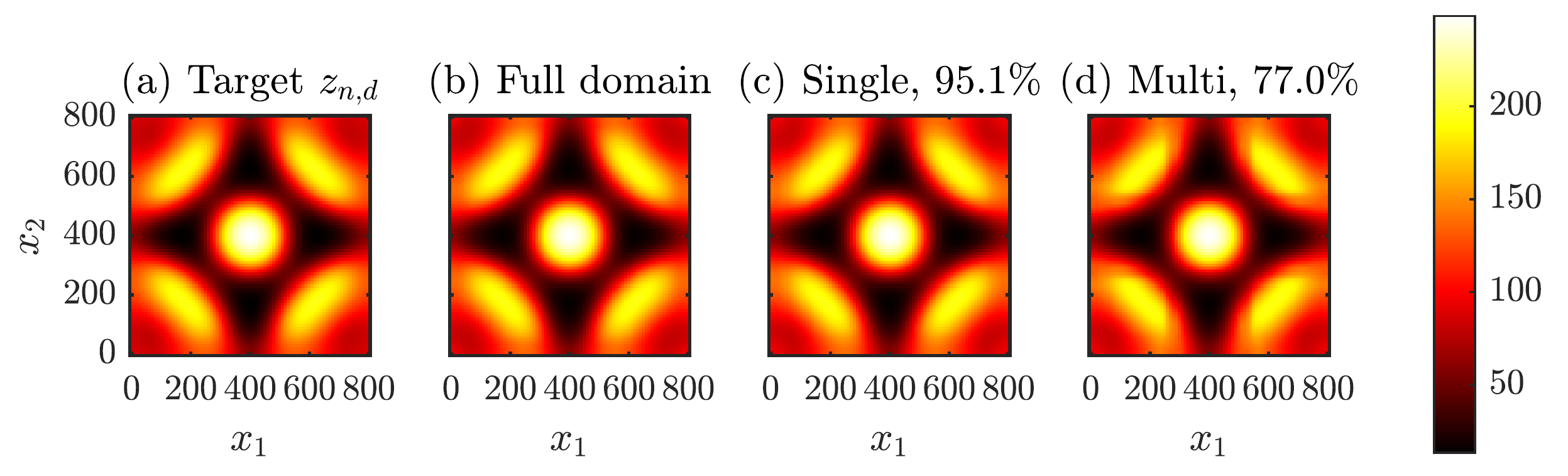}
  \caption{Target~2.}
  \label{fig2b}
\end{subfigure}
\caption{Nodal concentration $z_n(\cdot,T)$ at the final time. From left to right: target~$z_{n,d}$, full-domain control, single subdomain, multi-subdomain ($M=9$). Coverages: $76.6\%$ and $56.2\%$ (Target~1), $95.1\%$ and $77.0\%$ (Target~2).}
\label{fig2}
\end{figure}

\begin{figure}[t]
\centering
\begin{subfigure}[t]{\columnwidth}
  \centering
  \includegraphics[width=0.8\columnwidth]{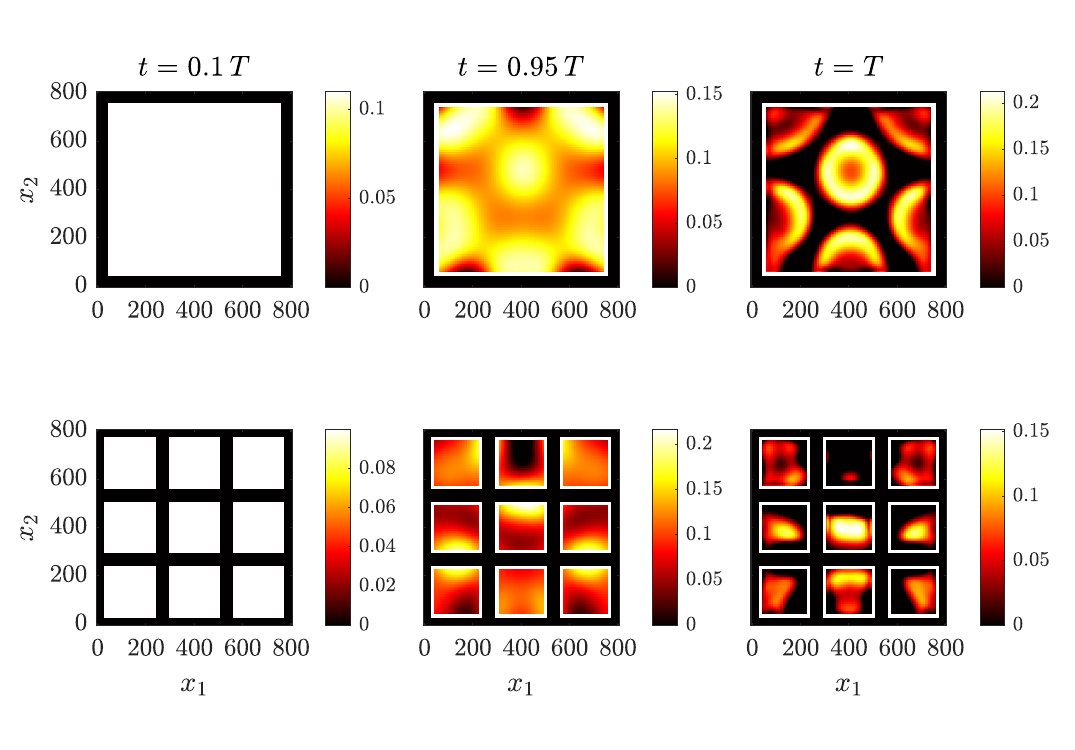}
  \caption{Target~1.}
  \label{fig3a}
\end{subfigure}\\[6pt]
\begin{subfigure}[t]{\columnwidth}
  \centering
  \includegraphics[width=0.8\columnwidth]{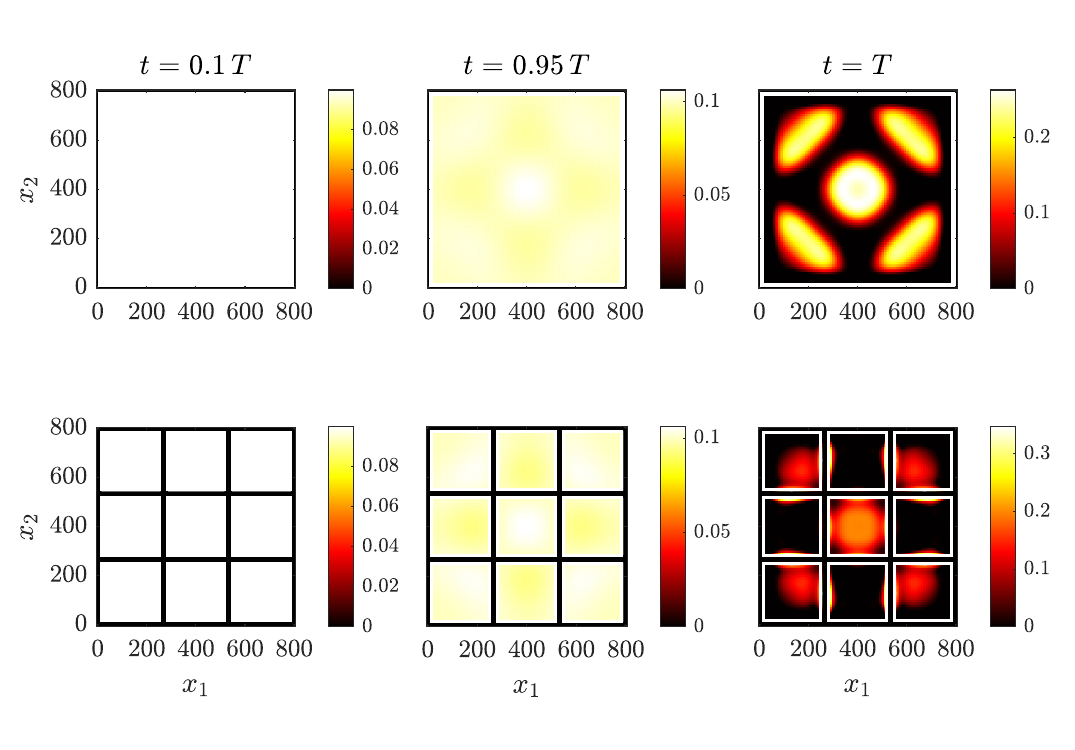}
  \caption{Target~2.}
  \label{fig3b}
\end{subfigure}
\caption{Optimal Nodal control $u^*_{n,\omega_\varepsilon}(\cdot,t)$ at $t\in\{0.1\,T,\,0.95\,T,\,T\}$. Top row: single subdomain; bottom row: multi-subdomain ($M=9$), with coverages as in Figure~\ref{fig2}. White rectangles delimit $\omega_\varepsilon$.}
\label{fig3}
\end{figure}

Table~\ref{tab2} compares the theoretical minimum coverages from Corollary~\ref{Cor1} with the numerical ones. The theoretical bounds overestimate the required coverage but correctly predict that the multi-subdomain configuration needs less area than the single subdomain for both targets. The gap between theoretical and numerical values reflects the fact that the bound is built on the truncated control $\tilde{u}_{\omega_\varepsilon}$, whose error is much larger than that of the multi-subdomain optimizer $u^*_{\omega_\varepsilon}$, especially for Target~1 (see the gap between solid and dashed red lines in Figure~\ref{fig1a}). Deriving tighter bounds that exploit the subdomain optimality conditions is the subject of ongoing work.
\begin{table}[t]
\centering
\caption{Minimum coverage $(\%)$ to achieve $e_{\mathrm{rel}}<2.5\%$.}
\label{tab2}
\begin{tabular}{@{}lcccc@{}}
\toprule
& \multicolumn{2}{c}{Target~1 ($T\!=\!2000$)}
& \multicolumn{2}{c}{Target~2 ($T\!=\!5000$)} \\
\cmidrule(lr){2-3}\cmidrule(lr){4-5}
& Single & Multi & Single & Multi \\
\midrule
Theoretical & $92$ & $85$ & $99$ & $89$ \\
Numerical   & $76.6$ & $38.8$ & $95.1$ & $77.0$ \\
\bottomrule
\end{tabular}
\end{table}

\section{Conclusion and Perspectives}\label{sec6}
We addressed the subdomain optimal control of reaction--diffusion systems from a different angle: we bounded the optimality loss due to spatial restriction. This bound yields the minimum subdomain size for a prescribed tolerance and shows that multi-subdomain configurations always give a tighter bound than a single one of the same area. Applied to the Nodal--Lefty system in synthetic developmental biology, a distributed control region covering roughly forty percent of the domain was sufficient to steer the system from stripes to spots with high accuracy.
Another direction, inspired by the
randomized sparse feedback framework
of~\cite{Hadach2026}, is to replace
the fixed control region by a randomly
selected subdomain at each time step
and study the resulting optimality properties.

\appendix
\subsection{Proof of Proposition~\ref{Prop1}}\label{app1}
Given two controls $u^1,u^2\in U_{\mathrm{ad}}$, let $z^1=S_\omega(u^1)$ and $z^2=S_\omega(u^2)$ be their corresponding states. We show that the $L^2$ norm of the difference $z^1-z^2$ is controlled by the $L^2$ norm of $u^1-u^2$, with constant $L_S$, by writing the equation satisfied by $\zeta:=z^1-z^2$, multiplying by $\zeta$, integrating over $\Omega$, and applying Gr\"onwall's inequality to bound $\|\zeta(t)\|^2_{(L^2(\Omega))^2}$.
Now, let $\zeta_i:=z^1_i-z^2_i$ and $v_i:=u^1_i-u^2_i$ for $i\in\{n,l\}$. Subtracting the two state equations gives
\begin{align}\label{eq53}
\partial_t \zeta_i &= D_i\Delta \zeta_i - \gamma_i \zeta_i \notag\\ &\quad + \alpha_i[H(z^1)-H(z^2)] + \beta_i\chi_\omega v_i
\end{align}
with $\zeta_i(\cdot,0)=0$ and $\partial_\nu \zeta_i|_\Sigma=0$. Multiplying~\eqref{eq53} by $\zeta_i$, integrating over $\Omega$, summing over $i$, and dropping the nonnegative gradient and degradation terms gives
\begin{equation}\label{eq54}
\frac{d}{dt}\|\zeta\|^2_{(L^2(\Omega))^2} \leq \kappa_1\|\zeta\|^2_{(L^2(\Omega))^2} + \kappa_2\|v\|^2_{(L^2(\Omega))^2}
\end{equation}
with $\kappa_1:=2\alpha_{\max}L_H+\sigma$ and $\kappa_2:=\beta_{\max}^2/\sigma$, where $\sigma>0$ is arbitrary, $\alpha_{\max}:=\max\{\alpha_n,\alpha_l\}$, and $\beta_{\max}:=\max\{\beta_n,\beta_l\}$. Here we used the global Lipschitz continuity of $H$ with constant $L_H$ and Cauchy--Schwarz to bound the nonlinear term, and Young's inequality with parameter $\sigma$ to bound the control term. Since $\zeta(\cdot,0)=0$, Gr\"onwall's inequality yields
\begin{equation}\label{eq55}
\sup_{t\in[0,T]}\|\zeta(t)\|^2_{(L^2(\Omega))^2} \leq \kappa_2\,e^{\kappa_1 T}\, \|v\|^2_{(L^2(Q))^2},
\end{equation}
which is~\eqref{eq16} with $L_S$ as in~\eqref{eq15}. \hfill$\square$

\subsection{Proof of Proposition~\ref{Prop3}}\label{app2}
The proof has two parts. First, we establish the identity~\eqref{eq29} by expanding the tracking term, using the integration by parts identity between $\zeta$ and $p^*$, and substituting the terminal condition. Second, we prove the three bounds~\eqref{eq30}--\eqref{eq32} using the projection formula, the Lipschitz continuity of $S_\omega$, and the $C^2$ regularity of $H$.
The identity~\eqref{eq29} follows from expanding $\|z_{\omega_\varepsilon}(T)-z_d\|^2 = \|\zeta(T)+(z^*(T)-z_d)\|^2$, using the terminal condition $p^*_i(T)=\mu(z^*_i(T)-z_{i,d})$, and the integration by parts identity
\begin{equation}\label{eq56}
\sum_i\!\int_\Omega \zeta_i(T)\,p^*_i(T)\,dx = E(\omega_\varepsilon) + \mathcal{R}(\omega_\varepsilon),
\end{equation}
obtained by multiplying~\eqref{eq26} by $p^*_i$, integrating over $Q$, summing over $i$, and integrating by parts in time and space using~\eqref{eq17} and the Neumann conditions for both $\zeta_i$ and $p^*_i$.

We now establish the three bounds. For~\eqref{eq30}, we use the projection formula~\eqref{eq18} pointwise on $(\Omega\setminus\omega_\varepsilon)\times(0,T)$, for each $i\in\{n,l\}$. Three cases arise.

If $u^*_{i,\Omega}=0$, the corresponding term in~\eqref{eq27} vanishes.

If $0<u^*_{i,\Omega}<u_{b,i}$, the projection is in the interior of $[0,u_{b,i}]$, so $u^*_{i,\Omega}=-\beta_i p^*_i/\lambda$, and therefore
\begin{equation}\label{eq57}
\beta_i\, u^*_{i,\Omega}\,(-p^*_i) = \lambda\,(u^*_{i,\Omega})^2.
\end{equation}

If $u^*_{i,\Omega}=u_{b,i}$, we bound $|p^*_i|\leq\|p^*_i\|_{L^\infty(Q)}$, which gives
\begin{equation}\label{eq58}
\beta_i\, u^*_{i,\Omega}\,(-p^*_i) \leq \frac{\beta_i\|p^*_i\|_{L^\infty(Q)}}{u_{b,i}}\,(u^*_{i,\Omega})^2.
\end{equation}

In all three cases,
\begin{equation}\label{eq59}
\beta_i\, u^*_{i,\Omega}\,(-p^*_i) \leq C_{E,i}\,(u^*_{i,\Omega})^2
\end{equation}
with $C_{E,i}:=\max\{\lambda,\, \beta_i\|p^*_i\|_{L^\infty(Q)}/u_{b,i}\}$. Integrating over $(\Omega\setminus\omega_\varepsilon)\times(0,T)$ and summing over $i$ gives~\eqref{eq30} with $C_E=\max_i C_{E,i}$.

The bound~\eqref{eq31} follows directly from Proposition~\ref{Prop1} applied to $u^*_\Omega$ and $\tilde{u}_{\omega_\varepsilon}$. For~\eqref{eq32}, the $C^2$ regularity of $H$ on $K$ and Taylor's theorem give
\begin{align}\label{eq60}
\bigl|H(z_{\omega_\varepsilon}) &-H(z^*) -\nabla_z H(z^*)\cdot \zeta\bigr| \notag\\ &\leq\tfrac{1}{2} \|D^2H\|_{L^\infty(K)}\,|\zeta|^2
\end{align}
a.e.\ in $Q$. Inserting into~\eqref{eq28} and using $|\alpha_i|\leq\alpha_{\max}$ together with $|p^*_i|\leq\|p^*_i\|_{L^\infty(Q)}$ from Remark~\ref{Rem3} gives
\begin{align}\label{eq61}
|\mathcal{R}(\omega_\varepsilon)| &\leq \frac{1}{2}\|D^2H\|_{L^\infty(K)} \notag\\ &\quad\times\sum_i \alpha_i\|p^*_i\|_{L^\infty(Q)} \int_Q |\zeta|^2\,dx\,dt.
\end{align}
By Proposition~\ref{Prop1}, $\sup_{[0,T]}\|\zeta(t)\|^2_{(L^2(\Omega))^2} \leq L_S^2\,r(\omega_\varepsilon)^2$, so integrating over $[0,T]$,
\begin{equation}\label{eq62}
\|\zeta\|^2_{(L^2(Q))^2} \leq T\,L_S^2\,r(\omega_\varepsilon)^2.
\end{equation}
Combining~\eqref{eq61} and~\eqref{eq62} gives~\eqref{eq32}. \hfill$\square$

\begin{thebibliography}{99}
\bibitem{Turing1952}
A.\,M.~Turing,
``The chemical basis of morphogenesis,''
\emph{Phil.\ Trans.\ R.\ Soc.\ Lond.\ B},
vol.~237, no.~641, pp.~37--72, 1952.

\bibitem{Ouchdiri2025_MathBio}
M.\,A.~Ouchdiri, S.~Benjelloun, A.~Saoud,
and I.~Otero-Muras,
``Turing patterns in a morphogenetic model
with single regulatory function,''
\emph{Math.\ Biosci.}, Art.~no.~109536,
2025.
\bibitem{Sekine2018}
R.~Sekine, T.~Shibata, and M.~Ebisuya,
``Synthetic mammalian pattern formation
driven by differential diffusivity of
Nodal and Lefty,''
\emph{Nat.\ Commun.}, vol.~9, no.~1,
Art.~no.~5456, 2018.

\bibitem{CDC_paper}
M.\,A.~Ouchdiri, H.~Faquir, S.~Benjelloun,
M.~Maghenem, I.~Otero-Muras, and A.~Saoud,
``An optimal-control framework for
reaction-diffusion systems with application
to synthetic developmental biology,''
in \emph{Proc.\ IEEE Conf.\ Decision and
Control}, pp.~1925--1930, 2025.

\bibitem{Lions1971}
J.-L.~Lions,
\emph{Optimal Control of Systems Governed
by Partial Differential Equations}.
Springer-Verlag, 1971.

\bibitem{Troeltzsch2010}
F.~Tr\"{o}ltzsch,
\emph{Optimal Control of Partial
Differential Equations: Theory, Methods
and Applications}.
AMS, 2010.


\bibitem{Krueger2019}
D.~Krueger, E.~Izquierdo, R.~Viber,
J.~Hartmann, and S.~De~Renzis,
``Principles and applications of
optogenetics in developmental biology,''
\emph{Development}, vol.~146, no.~20,
Art.~no.~dev175067, 2019.


\bibitem{Zheng2015}
Q.~Zheng, B.~Guo, and M.\,S.~Montaz~Ali,
``Continuous dependence of optimal control
to controlled domain of actuator for heat
equation,''
\emph{Syst.\ Control Lett.}, vol.~78,
pp.~49--56, 2015.

\bibitem{Morris2011}
K.~Morris,
``Linear-quadratic optimal actuator
location,''
\emph{IEEE Trans.\ Automat.\ Control},
vol.~56, no.~1, pp.~113--124, 2011.
\bibitem{Kasinathan2013}
D.~Kasinathan and K.~Morris,
``$H_\infty$-optimal actuator location,''
\emph{IEEE Trans.\ Automat.\ Control},
vol.~58, no.~10, pp.~2522--2535, 2013.

\bibitem{Rhein2016}
S.~Rhein and K.~Graichen,
``Coupled actuator placement and controller
design for electromagnetic heating by means
of dynamic optimization,''
in \emph{Proc.\ IEEE Conf.\ Decision and
Control}, pp.~4809--4814, 2016.

\bibitem{Laaroussi2019}
A.\,E.\,A.~Laaroussi, R.~Ghazzali,
M.~Rachik, and S.~Benrhila,
``Modeling the spatiotemporal transmission
of Ebola disease and optimal control: a
regional approach,''
\emph{Int.\ J.\ Dynam.\ Control}, vol.~7,
pp.~1110--1124, 2019.


\bibitem{Chang2022}
L.\,L.~Chang, W.~Gong, Z.~Jin,
and G.\,Q.~Sun,
``Sparse optimal control of pattern
formations for an SIR reaction--diffusion
epidemic model,''
\emph{SIAM J.\ Appl.\ Math.}, vol.~82,
no.~5, pp.~1764--1790, 2022.

\bibitem{CrankNicolson1947}
J.~Crank and P.~Nicolson,
``A practical method for numerical
evaluation of solutions of partial
differential equations of the
heat-conduction type,''
\emph{Math.\ Proc.\ Camb.\ Phil.\ Soc.},
vol.~43, no.~1, pp.~50--67, 1947.

\bibitem{PolakRibiere1969}
E.~Polak and G.~Ribi\`ere,
``Note sur la convergence de m\'ethodes
de directions conjugu\'ees,''
\emph{Rev.\ Fr.\ Inform.\ Rech.\
Op\'er.}, vol.~3, no.~16, pp.~35--43,
1969.

\bibitem{Ebrahimkhani2019}
M.\,R.~Ebrahimkhani and M.~Ebisuya,
``Synthetic developmental biology: build
and control multicellular systems,''
\emph{Curr.\ Opin.\ Chem.\ Biol.}, vol.~52,
pp.~9--15, 2019.


\bibitem{Barbu2012}
V.~Barbu,
\emph{Mathematical Methods in Optimization
of Differential Systems}.
Springer, 2012.

\bibitem{Pao1992}
C.\,V.~Pao,
\emph{Nonlinear Parabolic and Elliptic
Equations}.
Plenum Press, New York, 1992.

\bibitem{Evans2010}
L.\,C.~Evans,
\emph{Partial Differential Equations},
2nd ed., Graduate Studies in Mathematics,
vol.~19.
AMS, 2010.

\bibitem{Hadach2026}
Z.~Hadach, H.~El~Hammouti,
E.\,H.~Bergou, and A.~Saoud,
``Just few states are enough:
randomized sparse feedback for
stability of dynamical systems,''
in \emph{Proc.\ AAAI Conf.\ Artificial
Intelligence}, pp.~18271--18278, 2026.
\end{thebibliography}
\end{document}